%%%%%%%%%%%%%%%%%%%% author.tex %%%%%%%%%%%%%%%%%%%%%%%%%%%%%%%%%%%
%
% sample root file for your "contribution" to a contributed volume
%
% Use this file as a template for your own input.
%
%%%%%%%%%%%%%%%% Springer %%%%%%%%%%%%%%%%%%%%%%%%%%%%%%%%%%

% RECOMMENDED %%%%%%%%%%%%%%%%%%%%%%%%%%%%%%%%%%%%%%%%%%%%%%%%%%%
\documentclass[graybox]{svmult}

% choose options for [] as required from the list
% in the Reference Guide

\usepackage{mathptmx}       % selects Times Roman as basic font
\usepackage{helvet}         % selects Helvetica as sans-serif font
\usepackage{courier}        % selects Courier as typewriter font
\usepackage{type1cm}        % activate if the above 3 fonts are
                            % not available on your system
%
\usepackage{makeidx}         % allows index generation
\usepackage{graphicx}        % standard LaTeX graphics tool
                             % when including figure files
\usepackage{multicol}        % used for the two-column index
\usepackage[bottom]{footmisc}% places footnotes at page bottom

% see the list of further useful packages
% in the Reference Guide

\makeindex             % used for the subject index
                       % please use the style svind.ist with
                       % your makeindex program

\newcommand{\bX}{\mathbf{X}}
\newcommand{\bx}{\mathbf{x}}

\usepackage{amsmath} 
\usepackage{amssymb}
\usepackage{bm}
\usepackage{amsfonts}
\usepackage{enumerate}
\usepackage{natbib}
\bibliographystyle{plainnat}
\bibpunct{(}{)}{;}{a}{,}{,}

%%%%%%%%%%%%%%%%%%%%%%%%%%%%%%%%%%%%%%%%%%%%%%%%%%%%%%%%%%%%%%%%%%%%%%%%%%%%%%%%%%%%%%%%%

\begin{document}

\title*{High-Dimensional $p$-Norms}
\author{G\'erard Biau and David M. Mason}

\institute{G\'erard Biau \at Universit\'e Pierre et Marie Curie, Ecole Normale Sup{\'e}rieure \& Institut universitaire de France, \email{gerard.biau@upmc.fr}
\and David M. Mason \at University of Delaware, \email{davidm@udel.edu}}

\maketitle

\abstract{
Let $\bX=(X_1, \hdots, X_d)$ be a $\mathbb R^d$-valued random vector with i.i.d.~components, 
and let $\Vert\bX\Vert_p= ( \sum_{j=1}^d|X_j|^p)^{1/p}$ be its $p$-norm, for $p>0$. The impact of letting $d$
go to infinity on $\Vert\bX\Vert_p$ has surprising consequences, which may dramatically affect high-dimensional 
data processing. This effect is usually referred to as the {\it distance concentration phenomenon} 
in the computational learning literature. Despite a growing interest in this important question, previous work has essentially characterized the problem in terms of numerical experiments and incomplete mathematical statements. In the present paper, we solidify some of the arguments which previously appeared in the literature and offer new insights into the phenomenon.
}

\section{Introduction}
In what follows, for $\bx=(x_1, \hdots, x_d)$ a vector of $\mathbb R^d$ and $0<p<\infty$, we set
\begin{equation}
\label{un}
\Vert\bx\Vert_p= \left( \sum_{j=1}^d|x_j|^p\right)^{1/p}.
\end{equation}
It is recalled that for $p\geq 1$, $\Vert.\Vert_p$ is a norm on $\mathbb R^d$ (the $L^p$-norm) but for $0<p<1$, the triangle inequality does not hold
and $\Vert.\Vert_p$ is sometimes called a prenorm. In the sequel, we take the liberty to call $p$-norm a norm or prenorm
of the form (\ref{un}), with $p>0$. 

Now, let $\bX=(X_1, \hdots, X_d)$ be a $\mathbb R^d$-valued random vector with i.i.d.~components. 
The study of the probabilistic properties of $\Vert\bX\Vert_p$ as the dimension $d$ tends to infinity has recently witnessed
an important research effort in the computational learning community \citep[see, e.g.,][for a review]{FrWeVe07}.
This activity is easily explained by the central role played by the quantity $\Vert\bX\Vert_p$ in the analysis of nearest neighbor search algorithms, which are currently widely used in data management and database mining. Indeed, finding the closest matching object in an $L^p$-sense is of significant importance for numerous applications, including pattern recognition, multimedia content retrieving (images, videos, etc.), data mining, fraud detection and {\sc dna} sequence analysis, just to name a few. Most of these real applications involve very high-dimensional data (for example, pictures taken by a standard camera consist of several million pixels) and the curse of dimensionality (when $d \to \infty$) tends to be a major obstacle in the development of nearest neighbor-based techniques. 

The effect on $\Vert\bX\Vert_p$ of letting $d$ go large is usually referred to as the {\it distance concentration phenomenon} in the computational learning literature. It is in fact a quite vague term that encompasses several interpretations. For example, it has been observed by several authors \citep[e.g.,][]{FrWeVe07} that, under appropriate moment assumptions, the so-called {\it relative standard deviation} ${\sqrt{\text{Var} \Vert\bX\Vert_p}}/{\mathbb E \Vert\bX\Vert_p}$
tends to zero as $d$ tends to infinity. Consequently, by Chebyshev's inequality (this will be rigorously established in Section 2), for all $\varepsilon>0$,
$$\mathbb P \left \{ \left |\frac{\Vert\bX\Vert_p}{\mathbb E \Vert\bX\Vert_p}-1\right|\geq\varepsilon\right\}\to 0, \text{ as } d \to \infty.$$
This simple result reveals that the relative error made as considering $\mathbb E \Vert\bX\Vert_p$ instead of the random value $\Vert\bX\Vert_p$ becomes asymptotically negligible. Therefore, high-dimensional vectors $\bX$ appear to be distributed on a sphere of radius $\mathbb {E}\Vert\bX\Vert_p$. 

The distance concentration phenomenon is also often expressed by considering an i.i.d.~$\bX$ sample $\bX_1, \hdots, \bX_n$ and observing that, under certain conditions, the {\it relative contrast}
$$\frac{\max_{1 \leq i \leq n} \Vert\bX_i\Vert_p-\min_{1\leq i \leq n} \Vert\bX_i\Vert_p}{\min_{1\leq i \leq n} \Vert\bX_i\Vert_p}$$
vanishes in probability as $d$ tends to infinity, whereas the {\it contrast} 
$$\max_{1 \leq i \leq n} \Vert\bX_i\Vert_p-\min_{1\leq i \leq n} \Vert\bX_i\Vert_p$$ 
behaves in expectation as $d^{1/p-1/2}$ \citep[][]{BeGoRaSh99,HiAgKe00,AgHiKe01,Ka12}. Thus, assuming that the query point is located at the origin, the ratio between the largest and smallest $p$-distances from the sample to the query point becomes negligible as the dimension increases, and all points seem to be located at approximately the same distance from the origin. This phenomenon may dramatically affect high-dimensional data processing, analysis, retrieval and indexing, insofar these procedures rely on some notion of $p$-norm. Accordingly, serious questions are raised as to the validity of many nearest neighbor search heuristics in high dimension, a problem that can be further exacerbated by techniques that find approximate neighbors in order to improve algorithmic performance \citep[][]{BeGoRaSh99}.

Even if people have now a better understanding of the distance concentration phenomenon and its practical implications, it is however our belief that there is still a serious need to solidify its mathematical background. Indeed, previous work has essentially characterized the problem in terms of numerical experiments and (often) incomplete probabilistic statements, with missing assumptions and (sometimes) defective proofs. Thus, our objective in the present paper is to solidify some of the statements which previously appeared in the computational learning literature. We start in Section 2 by offering a thorough analysis of the behavior of the $p$-norm $\Vert\bX\Vert_p$ (as a function of $p$ and the properties of the distribution of $\bX$) as $d\to \infty$. Section 3 is devoted to the investigation of some new asymptotic properties of the {\it contrast} $\max_{1 \leq i \leq n} \Vert\bX_i\Vert_p-\min_{1\leq i \leq n} \Vert\bX_i\Vert_p$, both as $d\to \infty$ and $n\to \infty$. For the sake of clarity, most technical proofs are gathered in Section 4.
\section{Asymptotic behavior of $p$-norms}
\subsection{Consistency}
Throughout the document, the notation $\stackrel{\mathbb P}{\to}$ and $\stackrel{\mathcal D%
}{\to}$ stand for convergence in probability and in distribution, respectively. The notation $u_n=\text{o}(v_n)$  and $u_n=\text{O}(v_n)$ mean, respectively, that $u_n/v_n\to 0$ and $u_n \leq C v_n$ for some constant $C$, as $n\to \infty$. The symbols
$\text{o}_{\mathbb{P}}(v_{n})$ and $\text{O}_{\mathbb{P}}(v_{n})$ denote, respectively, a sequence of random variables $\{  Y_{n}\}_{n \geq 1}$ such that $Y_{n}/v_{n}$ $\overset{\mathbb{P}%
}{\to}0$ and $Y_{n}/v_{n}$ is
bounded in probability, as $n\to \infty$.

We start this section with a general proposition that plays a key role in the analysis.
\begin{proposition}
\label{PR1}
Let $\{U_{d}\} _{d\geq1}$ be a sequence of random variables such that $U_{d}\stackrel{\mathbb P}{\to}a$, and let $\varphi$
be a real-valued measurable function which is continuous at $a$.  Assume that:
\begin{enumerate}[$(i)$]
\item $\varphi$ is bounded on $[-M,M]$ for some $M> \vert a \vert$;
\item $\mathbb E \vert \varphi (  U_{d})\vert <\infty$ for all $d\geq1$.
\end{enumerate}
Then, as $d\to \infty$,
\begin{equation*}
\mathbb E\varphi (  U_{d})  \to\varphi (  a)
\label{concl}%
\end{equation*}
if and only if
\begin{equation}
\mathbb E\left(  \varphi\left( U_{d} \right)  \mathbf 1\left\{
\vert U_{d}\vert >M\right\}  \right)  \to0.
\label{concl2}%
\end{equation}
\end{proposition}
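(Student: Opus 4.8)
The plan is to split the expectation at the threshold $M$ and to show that the ``inner'' contribution always converges to $\varphi(a)$, so that the entire statement reduces to the behavior of the ``outer'' tail term. Concretely, for every $d$ I would write
$$\mathbb E \varphi(U_d) = \mathbb E\big(\varphi(U_d)\mathbf{1}\{|U_d|\le M\}\big) + \mathbb E\big(\varphi(U_d)\mathbf{1}\{|U_d|> M\}\big),$$
a legitimate decomposition because assumption $(ii)$ guarantees that all three expectations are finite. Denoting the first term on the right by $A_d$ and the second by $B_d$, the claimed equivalence is then immediate once I establish that $A_d\to\varphi(a)$: indeed $\mathbb E\varphi(U_d)=A_d+B_d$ converges to $\varphi(a)$ if and only if $B_d\to 0$, which is exactly condition (\ref{concl2}).

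The heart of the matter is thus the convergence $A_d\to\varphi(a)$, which I would prove via bounded convergence. First, because $\varphi$ is continuous at $a$ and $U_d\stackrel{\mathbb P}{\to}a$, a direct $\varepsilon$--$\delta$ estimate yields $\varphi(U_d)\stackrel{\mathbb P}{\to}\varphi(a)$: given $\varepsilon>0$, choose $\delta>0$ with $|x-a|<\delta\Rightarrow|\varphi(x)-\varphi(a)|<\varepsilon$, so that $\mathbb P\{|\varphi(U_d)-\varphi(a)|\ge\varepsilon\}\le\mathbb P\{|U_d-a|\ge\delta\}\to 0$. Second, since $M>|a|$, convergence in probability forces $\mathbb P\{|U_d|>M\}\to 0$, hence $\mathbf{1}\{|U_d|\le M\}\stackrel{\mathbb P}{\to}1$. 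Combining these two facts gives $\varphi(U_d)\mathbf{1}\{|U_d|\le M\}\stackrel{\mathbb P}{\to}\varphi(a)$.

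To pass from convergence in probability to convergence of expectations I would invoke the boundedness hypothesis $(i)$: setting $K:=\sup_{[-M,M]}|\varphi|<\infty$, one has $|\varphi(U_d)\mathbf{1}\{|U_d|\le M\}|\le K$ for every $d$. A sequence of uniformly bounded random variables that converges in probability also converges in $L^1$ (for instance by the subsequence principle: every subsequence admits a further almost surely convergent subsequence, to which the bounded convergence theorem applies, and all the resulting limits equal $\varphi(a)$). This delivers $A_d\to\varphi(a)$ and completes the argument.

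I expect the only genuinely delicate point to be the interplay between the \emph{local} hypotheses and the required integrability: $\varphi$ is assumed continuous and bounded only near $a$, so all quantitative control must be confined to the event $\{|U_d|\le M\}$, and the truncation at $M$ is precisely what makes the bounded convergence theorem applicable there. Everything outside that event is deliberately absorbed into the term $B_d$, which is why the conclusion takes the form of an equivalence rather than an unconditional convergence.
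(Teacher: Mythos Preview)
Your proof is correct and follows essentially the same approach as the paper: split $\mathbb E\varphi(U_d)$ according to $\{|U_d|\le M\}$ versus $\{|U_d|>M\}$, use boundedness on $[-M,M]$ together with continuity at $a$ and $U_d\stackrel{\mathbb P}{\to}a$ to pass to the limit in the first piece via bounded convergence, and conclude the equivalence from the decomposition. Your treatment is in fact a bit more careful than the paper's, since you explicitly justify the bounded convergence step under convergence in probability (rather than almost sure convergence) through the subsequence principle.
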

\begin{proof}
The proof is easy. Condition $(i)$ and continuity of $\varphi$ at
$a$ allow us to apply the bounded convergence theorem to get
\begin{equation*}
\mathbb E\left(  \varphi (  U_{d})  \mathbf 1\left\{  \vert U_{d}\vert
\leq M\right\}  \right)  \to\varphi (a).
\label{c1}%
\end{equation*}
Since
\[
\mathbb E\varphi (  U_{d})  =\mathbb E\left(  \varphi (  U_{d})  \mathbf 1\left\{
\vert U_{d}\vert \leq M\right\}  \right)  +\mathbb E\left(  \varphi (
U_{d}) \mathbf 1\left\{  \vert U_{d}\vert >M\right\}  \right),%
\]
the rest of the proof is obvious. 
\qed
\end{proof}

We shall now specialize the result of Proposition \ref{PR1} to the case when
\begin{equation*}
U_{d}=d^{-1}\sum_{j=1}^{d}Y_{j}:=\overline{Y}_{d},
\label{sum}%
\end{equation*}
where $\{Y_{j}\}_{j \geq 1}$ is a sequence of i.i.d.~$Y$ random variables
with finite mean $\mu$. In this case, by the strong law of large numbers,
$U_{d}\to\mu$ almost surely. The following lemma
gives two sufficient conditions for (\ref{concl2}) to hold when $U_d=\overline{Y}_{d}$.
\begin{lemma}
\label{L1}
let $\varphi$ be a real-valued measurable function. Assume that one of the following two conditions is satisfied:
\begin{enumerate}[{\bf {Condition} 1}]
\item The function $\vert \varphi\vert$ is convex on $\mathbb{R}$ and $\mathbb E\vert \varphi (
Y) \vert <\infty$.
\item[]
\item For some $s>1$,%
\begin{equation*}
\limsup_{d\to\infty}\mathbb E\left\vert \varphi (  \overline{Y}%
_{d})  \right\vert ^{s}<\infty.
\label{s}%
\end{equation*}
\end{enumerate}
Then (\ref{concl2}) is satisfied for the sequence $\{\overline{Y}_{d}\}_{d \geq 1}$ with $a=\mu$ and $M> \vert \mu\vert$. 
\end{lemma}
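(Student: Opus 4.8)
The plan is to reduce the whole statement to showing that $\mathbb E\big(|\varphi(\overline Y_d)|\mathbf 1\{|\overline Y_d|>M\}\big)\to 0$, since the triangle inequality gives $\big|\mathbb E(\varphi(\overline Y_d)\mathbf 1\{|\overline Y_d|>M\})\big|\le \mathbb E\big(|\varphi(\overline Y_d)|\mathbf 1\{|\overline Y_d|>M\}\big)$. The single fact I would record up front, and use in both cases, is that by the strong law of large numbers $\overline Y_d\to\mu$ almost surely, so that $\mathbf 1\{|\overline Y_d|>M\}\to 0$ almost surely and, because $M>|\mu|$, in particular $\mathbb P\{|\overline Y_d|>M\}\to 0$. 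Seen this way, the lemma is a uniform integrability statement: the contribution of $\varphi(\overline Y_d)$ over the vanishing-probability event $\{|\overline Y_d|>M\}$ must disappear, and the two conditions are two different ways of guaranteeing it.

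Under \textbf{Condition 2} I would argue directly with H\"older's inequality. Writing $q=s/(s-1)$ for the conjugate exponent of $s$,
\[
\mathbb E\big(|\varphi(\overline Y_d)|\mathbf 1\{|\overline Y_d|>M\}\big)\le \big(\mathbb E|\varphi(\overline Y_d)|^{s}\big)^{1/s}\,\big(\mathbb P\{|\overline Y_d|>M\}\big)^{1/q}.
\]
The first factor stays bounded along the sequence by the $\limsup$ hypothesis, while the second tends to $0$ by the remark above; their product therefore vanishes.

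Under \textbf{Condition 1} the extra ingredient is Jensen's inequality. Since $\overline Y_d$ is the arithmetic mean of $Y_1,\dots,Y_d$ and $|\varphi|$ is convex, the finite form of Jensen's inequality applied pointwise yields $|\varphi(\overline Y_d)|\le d^{-1}\sum_{j=1}^{d}|\varphi(Y_j)|=:\overline Z_d$. The variables $|\varphi(Y_j)|$ are i.i.d., nonnegative and integrable with mean $\mathbb E|\varphi(Y)|$, so $\mathbb E\overline Z_d=\mathbb E|\varphi(Y)|$ for every $d$, while $\overline Z_d\to\mathbb E|\varphi(Y)|$ almost surely. By Scheff\'e's lemma this forces $\overline Z_d\to\mathbb E|\varphi(Y)|$ in $L^1$, and an $L^1$-convergent sequence is uniformly integrable. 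Uniform integrability of $\{\overline Z_d\}_{d\ge1}$ then supplies, for each $\varepsilon>0$, a $\delta>0$ such that $\mathbb P(A)<\delta$ implies $\sup_d\mathbb E(\overline Z_d\mathbf 1_A)<\varepsilon$. Taking $A=\{|\overline Y_d|>M\}$, whose probability tends to $0$, and bounding $|\varphi(\overline Y_d)|$ by $\overline Z_d$ gives $\mathbb E\big(|\varphi(\overline Y_d)|\mathbf 1\{|\overline Y_d|>M\}\big)\le \mathbb E\big(\overline Z_d\mathbf 1\{|\overline Y_d|>M\}\big)\to 0$.

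The only delicate point is the uniform integrability of the sample means $\{\overline Z_d\}$ in the convex case: almost sure convergence by itself would not control the tail integral uniformly in $d$, and this is precisely what the Scheff\'e/$L^1$-convergence step repairs. The H\"older argument under Condition 2 is, by contrast, essentially immediate once the probability of the exceptional event is known to vanish.
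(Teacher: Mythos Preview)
Your proof is correct. The treatment of \textbf{Condition~2} coincides with the paper's: both apply H\"older's inequality and use that $\mathbb P\{|\overline Y_d|>M\}\to 0$.

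For \textbf{Condition~1} the paper takes a shorter route. After the same pointwise Jensen bound $|\varphi(\overline Y_d)|\le d^{-1}\sum_{j=1}^d|\varphi(Y_j)|$, it takes expectations and exploits the exchangeability of the i.i.d.\ sample (the event $\{|\overline Y_d|>M\}$ is symmetric in $Y_1,\dots,Y_d$) to collapse the average to a single term:
\[
\mathbb E\big(|\varphi(\overline Y_d)|\,\mathbf 1\{|\overline Y_d|>M\}\big)\le \mathbb E\big(|\varphi(Y_1)|\,\mathbf 1\{|\overline Y_d|>M\}\big).
\]
The dominated convergence theorem then applies directly with dominating function $|\varphi(Y_1)|$, since $\mathbf 1\{|\overline Y_d|>M\}\to 0$ almost surely. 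Your Scheff\'e/uniform-integrability detour through the sample means $\overline Z_d$ reaches the same conclusion but is not needed here: the symmetry already hands you a fixed integrable majorant, so one step of DCT suffices. What your argument buys is a bit of robustness---it would survive in settings where that symmetry trick is unavailable---but in the present i.i.d.\ framework the paper's one-line reduction is the cleaner choice.
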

\begin{proof}
Suppose that {\bf Condition 1} is satisfied. Then
note that by the convexity assumption%
\begin{align*}
\mathbb E\left(  \left\vert \varphi (  \overline{Y}_{d})  \right\vert
\mathbf 1\left\{  \vert \overline{Y}_{d}\vert >M\right\}  \right) & \leq
d^{-1}\sum_{j=1}^{d}\mathbb E\left(  \left\vert \varphi (  Y_{j})
\right\vert \mathbf 1\left\{  \vert \overline{Y}_{d}\vert >M\right\}
\right)\\
&=\mathbb E\left(  \left\vert \varphi (  Y)  \right\vert \mathbf 1\left\{
\vert \overline{Y}_{d}\vert >M\right\}  \right)  .
\end{align*}
Since $M>\vert \mu\vert $, we conclude that with
probability one, $\vert \varphi (  Y) \vert
\mathbf 1 \{  \vert \overline{Y}_{d}\vert >M\}  \to0$.
Also $\vert \varphi (  Y) \vert \mathbf 1 \{  \vert
\overline{Y}_{d}\vert >M\}  \leq \vert \varphi (
Y)  \vert $. Therefore, by the dominated convergence theorem,
(\ref{concl2}) holds.

Next, notice by H\"{o}lder's inequality with
$1/r=1-1/s$ that
\[
\mathbb E\left(  \left\vert \varphi (  \overline{Y}_{d})  \right\vert \mathbf 1\left\{  \vert
\overline{Y}_{d}\vert >M\right\}  \right)  \leq\left(  \mathbb E\left\vert
\varphi (  \overline{Y}_{d})  \right\vert ^{s}\right)  ^{1/s}\left(
\mathbb P\left\{  \vert \overline{Y}_{d}\vert >M\right\}  \right)  ^{1/r}.
\]
Since $\mathbb P\{  \vert \overline{Y}_{d}\vert >M\}
\to0$, (\ref{concl2}) immediately follows from {\bf Condition 2}.
\qed
\end{proof}

Let us now return to the distance concentration problem, which has been discussed in the introduction. Recall that we denote
by $\bX=(X_1, \hdots, X_d)$ a $\mathbb R^d$-valued random vector with i.i.d.~X components. 
Whenever for $p>0$
$\mathbb E\vert X\vert ^{p}<\infty$, we set $\mu_{p}=\mathbb E \vert X\vert
^{p}$. Also when $\text{Var}\vert X\vert ^{p}  <\infty
$, we shall write $\sigma_{p}^{2}=\text{Var}  \vert X\vert
^{p}$. Proposition \ref{PR1} and Lemma \ref{L1} yield the following corollary:
\begin{corollary}
\label{C1}
Fix $p>0$ and $r>0$.
\begin{enumerate}[$(i)$]
\item Whenever $r/p<1$ and $\mathbb E\vert X\vert ^{p}<\infty$,
\begin{equation*}
\frac{\mathbb E\Vert \bX \Vert_p^r}{d^{r/p}}\to\mu_{p}^{r/p},\text{ as }d\to\infty,
\label{d1}%
\end{equation*}
whereas if $\mathbb E\vert X\vert ^{p}=\infty$, then
\begin{equation*}
\lim_{d \to \infty}\frac{\mathbb E\Vert \bX \Vert_p^r}{d^{r/p}}=\infty.
\label{d11}%
\end{equation*}
\item Whenever $r/p\geq1$ and $\mathbb E\vert X\vert ^{r}<\infty$,
\begin{equation*}
\frac{\mathbb E\Vert \bX \Vert_p^r}{d^{r/p}}\to\mu_{p}^{r/p},\text{ as }d\to\infty,
\label{d2}%
\end{equation*}
whereas if $\mathbb E\vert X\vert ^{r}=\infty$, then, for all $d\geq1$,
\begin{equation*}
\frac{\mathbb E\Vert \bX \Vert_p^r}{d^{r/p}}=\infty.
\end{equation*}
\end{enumerate}
\end{corollary}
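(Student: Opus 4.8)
The plan is to reduce everything to a single application of Proposition \ref{PR1} and Lemma \ref{L1}. Set $Y_j=|X_j|^p$ and $\varphi(u)=|u|^{r/p}$, so that the $Y_j$ are i.i.d.~nonnegative with mean $\mu_p$ (possibly infinite) and
\begin{equation*}
\frac{\Vert\bX\Vert_p^r}{d^{r/p}}=\left(\frac{1}{d}\sum_{j=1}^d|X_j|^p\right)^{r/p}=\varphi(\overline{Y}_d).
\end{equation*}
When $\mu_p<\infty$, the strong law of large numbers gives $\overline{Y}_d\to\mu_p$ almost surely, hence in probability, and $\varphi$ is continuous everywhere with $\varphi(\mu_p)=\mu_p^{r/p}$. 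Hypothesis $(i)$ of Proposition \ref{PR1} holds since $|u|^{r/p}$ is bounded on each $[-M,M]$, so it only remains, in each regime, to check the integrability hypothesis $(ii)$ of Proposition \ref{PR1} and the tail condition (\ref{concl2}) via Lemma \ref{L1}; the convergence $\mathbb E\varphi(\overline{Y}_d)\to\mu_p^{r/p}$ then follows.

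For part $(ii)$, where $r/p\ge1$, the function $\varphi(u)=|u|^{r/p}$ is convex, so I would invoke {\bf Condition 1} of Lemma \ref{L1}: its moment requirement $\mathbb E|\varphi(Y)|=\mathbb E|X|^r<\infty$ is exactly the standing assumption, and the same convexity (Jensen's inequality, giving $(\overline{Y}_d)^{r/p}\le d^{-1}\sum_j|X_j|^r$) supplies hypothesis $(ii)$ of Proposition \ref{PR1} for every $d$. The divergent sub-case $\mathbb E|X|^r=\infty$ is immediate, because $\sum_j|X_j|^p\ge|X_1|^p$ and $t\mapsto t^{r/p}$ is nondecreasing force $\Vert\bX\Vert_p^r\ge|X_1|^r$, whence $\mathbb E\Vert\bX\Vert_p^r=\infty$ for all $d\ge1$.

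Part $(i)$, where $r/p<1$, is the more delicate regime and the main obstacle. Now $\varphi$ is concave on $[0,\infty)$ rather than convex, so {\bf Condition 1} is unavailable and I would verify {\bf Condition 2} instead. The key observation is that, since $r/p<1$, one can choose $s$ with $1<s<p/r$, so that $sr/p<1$; Jensen's inequality for the concave map $t\mapsto t^{sr/p}$ then yields $\mathbb E(\overline{Y}_d)^{sr/p}\le(\mathbb E\overline{Y}_d)^{sr/p}=\mu_p^{sr/p}$, a bound uniform in $d$, which is precisely $\limsup_d\mathbb E|\varphi(\overline{Y}_d)|^s<\infty$. The same bound with $s=1$ gives hypothesis $(ii)$ of Proposition \ref{PR1}. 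For the divergent sub-case $\mathbb E|X|^p=\infty$, I would argue by truncation: with $Y_j^{(K)}=|X_j|^p\wedge K$ one has $(\overline{Y}_d)^{r/p}\ge(d^{-1}\sum_j Y_j^{(K)})^{r/p}$, and since the truncated average is bounded and converges a.s.~to $\mathbb E(|X|^p\wedge K)$, bounded convergence yields $\liminf_d\mathbb E(\overline{Y}_d)^{r/p}\ge(\mathbb E(|X|^p\wedge K))^{r/p}$; letting $K\to\infty$ and using monotone convergence, $\mathbb E(|X|^p\wedge K)\uparrow\infty$, forces the ratio to diverge.

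The crux is thus the interplay between the sign of $1-r/p$ and the convexity or concavity of $\varphi$: the convex regime is handled directly by {\bf Condition 1}, whereas the concave regime needs the extra room $s>1$ with $sr/p<1$ in order to apply {\bf Condition 2}. I expect the only other point demanding care to be the bookkeeping of the divergent sub-cases, and in particular the truncation-and-Fatou argument used when $\mathbb E|X|^p=\infty$.
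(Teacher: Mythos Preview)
Your proof is correct and follows essentially the same route as the paper's: the same choice $Y_j=|X_j|^p$, $\varphi(u)=|u|^{r/p}$, Condition~1 in the convex regime $r/p\ge1$, Condition~2 in the concave regime $r/p<1$, and truncation for the divergent sub-case of~$(i)$. The only cosmetic difference is that in part~$(i)$ the paper takes $s=p/r$ exactly, so that $\mathbb E|\varphi(\overline{Y}_d)|^{s}=\mathbb E\,\overline{Y}_d=\mu_p$ by a direct computation, whereas you pick $1<s<p/r$ and invoke an extra Jensen step for the concave map $t\mapsto t^{sr/p}$.
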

\begin{proof}
We shall apply Proposition \ref{PR1} and Lemma \ref{L1} to $Y=\vert
X\vert ^{p}$, $Y_{j}=\vert X_{j}\vert ^{p}$, $j\geq1$, and
$\varphi (  u)  =\vert u\vert ^{r/p}$.
\paragraph{Proof of $(i)$} For the first part of $(i)$, notice that with
$s=p/r>1$
\[
\mathbb E\left \vert \varphi\left(  \frac{\sum_{j=1}^{d}\vert X_{j}\vert ^{p}}%
{d}\right) \right\vert  ^{s}=\frac{\sum_{j=1}^{d}\mathbb E \vert X_{j}\vert ^{p}}%
{d}=\mathbb E \vert X \vert ^{p}<\infty.%
\]
This shows that sufficient {\bf Condition 2} of Lemma \ref{L1} holds, which by Proposition \ref{PR1} gives the result.

For the second part of $(i)$ observe that for any $K>0$
\[
\mathbb E\left(  \frac{\sum_{j=1}^{d}\vert X_{j}\vert ^{p}}{d}\right)
^{r/p}\geq \mathbb E\left(  \frac{\sum_{j=1}^{d}\vert X_{j}\vert
^{p}\mathbf 1\left\{  \vert X_{j}\vert \leq K\right\}  }{d}\right)  ^{r/p}.
\]
Observing that the right-hand side of the inequality converges to $(\mathbb E\vert
X\vert ^{p}\mathbf 1\{  \vert X\vert \leq K\})  ^{r/p}  $
as $d\to\infty$, we get for any $K>0$%
\[
\liminf_{d\to\infty}\mathbb E\left(  \frac{\sum_{j=1}^{d}\vert
X_{j}\vert ^{p}}{d}\right)  ^{r/p}\geq  \mathbb E\left(  \vert
X\vert ^{p}\mathbf 1\left\{  \vert X\vert \leq K\right\}  \right)  ^{r/p}.
\]
Since $K$ can be chosen arbitrarily large and we assume that $\mathbb E\vert
X\vert ^{p}=\infty$, we see that the conclusion holds.
\paragraph{Proof of $(ii)$} For the first part of $(ii)$, note that in this
case $r/p\geq1$, so $\varphi$ is convex. Moreover, note that
\begin{align*}
\mathbb E \left \vert \varphi \left ( \frac{\sum_{j=1}^d \vert X_j\vert^p}{d}\right)\right \vert & = \mathbb E \left ( \frac{\sum_{j=1}^d \vert X_j\vert^p}{d}\right )^{r/p} \\
& \leq d^{-1}{\mathbb E \vert X\vert ^r}\\
&\quad \mbox{(by Jensen's inequality)}\\
&<\infty.
\end{align*}
Thus sufficient {\bf Condition 1} of Lemma \ref{L1} holds,
which by Proposition \ref{PR1} leads to the result.

For the second part of $(ii)$, observe that if $\mathbb E\vert X\vert ^{r}%
=\infty$, then, for all $d\geq1$,
\[
\mathbb E\left(  \frac{\sum_{j=1}^{d} \vert X_{j}\vert ^{p}}{d}\right)
^{r/p}\geq d^{-r/p}\mathbb E\vert X\vert ^{r}=\infty.
\]
\qed
\end{proof}

Applying Corollary \ref{C1} with $p>0$ and $r=2$ yields the following important
result:
\begin{proposition}
Fix $p>0$ and assume that $0<\mathbb E \vert X\vert ^{m}<\infty$ for $m=\max(2,p)$. Then, as $d\to\infty$,
\begin{equation*}
\frac{\mathbb E\Vert \bX \Vert_p}{d^{1/p}}\to\mu_{p}^{1/p}
\label{d5}
\end{equation*}
and%
\begin{equation*}
\frac{\mathbb E\Vert \bX \Vert_p^{2}}{d^{2/p}}\to\mu_{p}^{2/p},
\label{d6}%
\end{equation*}
which implies
\[
\frac{\sqrt{\emph{Var}\Vert \bX\Vert _{p}}%
}{\mathbb E\Vert \bX\Vert _{p}}\to0,\text{ as } d \to \infty.%
\]
\end{proposition}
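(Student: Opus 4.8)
The plan is to read off both limits directly from Corollary~\ref{C1}, applied to the fixed exponent $p$ once with $r=1$ and once with $r=2$, and afterwards to combine them to control the variance. Since $\mu_p=\mathbb E|X|^p$ and the single moment hypothesis is $0<\mathbb E|X|^m<\infty$ with $m=\max(2,p)$, essentially all of the work is bookkeeping: I would verify that this one hypothesis supplies, in every regime of $p$, precisely the moment that Corollary~\ref{C1} demands.

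For the first limit take $r=1$. Corollary~\ref{C1} splits according to whether $r/p=1/p$ is $<1$ or $\ge 1$, i.e. whether $p>1$ or $p\le 1$. If $p>1$ then part $(i)$ applies and requires $\mathbb E|X|^p<\infty$; this holds because $p\le m$ and $\mathbb E|X|^m<\infty$ forces $\mathbb E|X|^p<\infty$ by monotonicity of $L^q$-moments (Jensen/Lyapunov). If $p\le 1$ then part $(ii)$ applies and requires $\mathbb E|X|<\infty$, which again follows from $\mathbb E|X|^m<\infty$ since $1\le 2=m$. Either way, $\mathbb E\Vert\bX\Vert_p/d^{1/p}\to\mu_p^{1/p}$. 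For the second limit take $r=2$: the dichotomy is now $p>2$ versus $p\le 2$. If $p>2$, part $(i)$ needs $\mathbb E|X|^p<\infty$, which holds because here $m=p$; if $p\le 2$, part $(ii)$ needs $\mathbb E|X|^2<\infty$, which holds because here $m=2$. Hence $\mathbb E\Vert\bX\Vert_p^2/d^{2/p}\to\mu_p^{2/p}$.

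The variance statement then follows from the exact cancellation of leading terms. Writing
\[
\frac{\text{Var}\Vert\bX\Vert_p}{(\mathbb E\Vert\bX\Vert_p)^2}
=\frac{\mathbb E\Vert\bX\Vert_p^2}{(\mathbb E\Vert\bX\Vert_p)^2}-1
=\frac{\mathbb E\Vert\bX\Vert_p^2/d^{2/p}}{\bigl(\mathbb E\Vert\bX\Vert_p/d^{1/p}\bigr)^2}-1,
\]
the numerator tends to $\mu_p^{2/p}$ and the denominator to $(\mu_p^{1/p})^2=\mu_p^{2/p}$ by the two limits just established. Because $\mu_p>0$ (the hypothesis $\mathbb E|X|^m>0$ rules out $X\equiv 0$), the common limit is strictly positive, the displayed ratio converges to $1$, and the right-hand side tends to $0$. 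Taking square roots gives $\sqrt{\text{Var}\Vert\bX\Vert_p}/\mathbb E\Vert\bX\Vert_p\to 0$.

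I do not expect any genuinely hard step here. The only points demanding care are the regime-by-regime check that $\mathbb E|X|^m<\infty$ covers the moment required by Corollary~\ref{C1}, and the observation that the positivity of $\mu_p$ is needed both to divide by $(\mathbb E\Vert\bX\Vert_p)^2$ and to guarantee that numerator and denominator tend to the \emph{same} nonzero constant, so that their leading $d^{2/p}$ behaviour cancels and the variance is of strictly smaller order than $d^{2/p}$.
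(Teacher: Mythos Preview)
Your proposal is correct and follows exactly the route the paper intends: the proposition is presented there as an immediate consequence of Corollary~\ref{C1} (the paper writes ``Applying Corollary~\ref{C1} with $p>0$ and $r=2$'' and gives no further proof), and you supply the careful case-by-case verification that the single moment hypothesis $\mathbb E|X|^{\max(2,p)}<\infty$ covers the conditions of Corollary~\ref{C1} for both $r=1$ and $r=2$. Your treatment is in fact more complete than the paper's, which only cites the $r=2$ case explicitly.
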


This result, when correctly stated, corresponds to Theorem 5 of \citet{FrWeVe07}.
It expresses the fact that the {\it relative standard deviation} converges towards zero when the dimension grows.
It is known in the computational learning literature as the
 $p$-norm concentration in high-dimensional spaces. It is noteworthy that, by Chebyshev's inequality, for all $\varepsilon>0$,
\begin{align}
\mathbb P \left \{\left |\frac{\Vert\bX\Vert_p}{\mathbb E \Vert\bX\Vert_p}-1\right|\geq \varepsilon \right\} &= \mathbb P \Big \{\left |\,\Vert\bX\Vert_p-\mathbb E \Vert\bX\Vert_p\,\right|\geq \varepsilon \mathbb E \Vert\bX\Vert_p\Big\} \nonumber\\
& \leq \frac{\text{Var}\Vert \bX\Vert _{p}}%
{\varepsilon^2\mathbb E^2\Vert \bX\Vert _{p}}\to0\text{, as }d\to
\infty.\label{TCI}
\end{align}
That is, ${\Vert\bX\Vert_p}/{\mathbb E \Vert\bX\Vert_p}\stackrel{\mathbb P}{\to} 1$ or, in other words, the sequence $\{\Vert\bX\Vert_p\}_{d \geq 1}$ is relatively stable \citep[][]{BoLuMa13}. This property guarantees that the random fluctuations of $\Vert\bX\Vert_p$ around its expectation are of negligible size when compared to the expectation, and therefore most information about the size of $\Vert\bX\Vert_p$ is given by $\mathbb E\Vert\bX\Vert_p$ as $d$ becomes large.
\subsection{Rates of convergence}
The asymptotic concentration statement of Corollary \ref{C1} can be made more precise by means of rates of convergence, at the price of stronger moment assumptions.  To reach this objective, we first need a general result to control the behavior of a function of an i.i.d.~empirical mean around its true value. Thus, assume that $\{Y_{j}\}_{j \geq 1}$ are i.i.d.~$Y$ with mean $\mu$ and
variance $\sigma^{2}$. As before, we define
$$\overline Y_d=d^{-1}\sum_{j=1}^d Y_j.$$
Let $\varphi$ be a real-valued function with
derivatives $\varphi^{\prime}$ and $\varphi^{\prime\prime}$. 
\citet{Kh04} provides sufficient conditions for
\begin{equation*}
\mathbb E\varphi (  \overline{Y}_d)  =\varphi (  \mu)
+\frac{\varphi^{\prime\prime} (  \mu)  \sigma^{2}}{2d}+\text{o}(
d^{-2}) \label{phi1}%
\end{equation*}
to hold. The following lemma, whose assumptions are less restrictive, can be used in place of Khan's result
\citeyearpar{Kh04}. For the sake of clarity, its proof is postponed to Section 4. 
\begin{lemma}
\label{LK}
Let $\{Y_{j}\}_{j \geq 1}$ be a sequence of i.i.d.~$Y$ random variables with mean $\mu$
and variance $\sigma^{2}$, and $\varphi$ be a real-valued function with
continuous derivatives $\varphi^{\prime}$ and $\varphi^{\prime\prime}$ in a
neighborhood of $\mu$. 
 Assume that for some $r>1$,
\begin{equation}
\mathbb E \vert Y\vert ^{r+1}<\infty
\label{rr}%
\end{equation}
and, with $1/s=1-1/r$,
\begin{equation}
\limsup_{d\to\infty}\mathbb E  \left\vert \varphi (  \overline
{Y}_{d})  \right\vert ^{s}  <\infty.\label{ss}%
\end{equation}
Then, as $d\to\infty$,
\begin{equation*}
\mathbb E\varphi(  \overline{Y}_d)  =\varphi (  \mu)
+\frac{\varphi^{\prime\prime}(  \mu) \sigma^2}{2d}+\emph{o}(
d^{-1}).\label{small}%
\end{equation*}
\end{lemma}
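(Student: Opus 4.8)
The plan is to Taylor-expand $\varphi$ to second order around $\mu$ and take expectations, using the strong law $\overline{Y}_{d}\to\mu$ together with the two hypotheses to control everything outside a fixed neighbourhood of $\mu$. Fix $\varepsilon>0$ and, by continuity of $\varphi''$ at $\mu$, choose $\delta>0$ so small that $\varphi',\varphi''$ exist and are continuous on $[\mu-\delta,\mu+\delta]$ with $\vert\varphi''(x)-\varphi''(\mu)\vert\leq\varepsilon$ there. Writing $A_{d}=\{\vert\overline{Y}_{d}-\mu\vert\leq\delta\}$, I would split $\mathbb E\varphi(\overline{Y}_{d})=\mathbb E(\varphi(\overline{Y}_{d})\mathbf 1\{A_{d}\})+\mathbb E(\varphi(\overline{Y}_{d})\mathbf 1\{A_{d}^{c}\})$ and analyse the two pieces separately.

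On $A_{d}$, Taylor's theorem with Lagrange remainder gives $\varphi(\overline{Y}_{d})=\varphi(\mu)+\varphi'(\mu)(\overline{Y}_{d}-\mu)+\tfrac12\varphi''(\mu)(\overline{Y}_{d}-\mu)^{2}+R_{d}$, where $\vert R_{d}\vert\leq\tfrac12\varepsilon(\overline{Y}_{d}-\mu)^{2}$ on $A_{d}$. Since $\mathbb E(\overline{Y}_{d}-\mu)=0$ and $\mathbb E(\overline{Y}_{d}-\mu)^{2}=\sigma^{2}/d$, taking expectations of the three leading terms over the full space reproduces exactly $\varphi(\mu)+\varphi''(\mu)\sigma^{2}/(2d)$, so restricting to $A_{d}$ costs only the three correction terms $-\varphi(\mu)\mathbb P(A_{d}^{c})$, $-\varphi'(\mu)\mathbb E((\overline{Y}_{d}-\mu)\mathbf 1\{A_{d}^{c}\})$ and $-\tfrac12\varphi''(\mu)\mathbb E((\overline{Y}_{d}-\mu)^{2}\mathbf 1\{A_{d}^{c}\})$, while the remainder contributes $\mathbb E(R_{d}\mathbf 1\{A_{d}\})=\mathrm O(\varepsilon/d)$. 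Each correction term I would bound by H\"older against $\mathbb P(A_{d}^{c})$, using the moment estimate $\mathbb E\vert\overline{Y}_{d}-\mu\vert^{r+1}=\mathrm O(d^{-(r+1)/2})$ (valid under (\ref{rr}) by Rosenthal's inequality, as $r+1>2$); each is then $\mathrm o(d^{-1})$ once the tail bound below is available.

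The decisive step is the bad-event term. H\"older with the conjugate pair $(s,r)$, $1/s=1-1/r$, together with hypothesis (\ref{ss}) gives $\vert\mathbb E(\varphi(\overline{Y}_{d})\mathbf 1\{A_{d}^{c}\})\vert\leq(\mathbb E\vert\varphi(\overline{Y}_{d})\vert^{s})^{1/s}(\mathbb P(A_{d}^{c}))^{1/r}=\mathrm O((\mathbb P(A_{d}^{c}))^{1/r})$, so the whole scheme closes provided $\mathbb P(A_{d}^{c})=\mathbb P(\vert\overline{Y}_{d}-\mu\vert>\delta)=\mathrm o(d^{-r})$. This is precisely the role of (\ref{rr}), and it is the main obstacle: the crude estimates are not sharp enough, since Chebyshev yields only $\mathrm O(d^{-1})$ and Markov at order $r+1$ with Rosenthal only $\mathrm O(d^{-(r+1)/2})$, with $(r+1)/2<r$ for $r>1$. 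Instead I would truncate each centred summand $W_{j}=Y_{j}-\mu$ at level $\eta d$: the ``big-jump'' contribution $d\,\mathbb P(\vert W\vert>\eta d)$ is $\mathrm o(d^{-r})$ because $\mathbb E\vert W\vert^{r+1}<\infty$ forces $t^{r+1}\mathbb P(\vert W\vert>t)\to0$, whereas the truncated bulk, whose deviation of order $d$ requires many summands of size $\mathrm O(d)$ to align, is controlled by a Fuk--Nagaev (Bennett-type) exponential inequality, which in this regime beats any prescribed power of $d$ once $\eta$ is chosen small relative to $\delta$.

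Finally I would assemble the pieces. For each fixed $\varepsilon$ (hence fixed $\delta$) the bad-event and correction terms are $\mathrm o(d^{-1})$, leaving $\mathbb E\varphi(\overline{Y}_{d})=\varphi(\mu)+\varphi''(\mu)\sigma^{2}/(2d)+\mathrm O(\varepsilon/d)+\mathrm o(d^{-1})$. Multiplying by $d$, taking $\limsup_{d\to\infty}$ and then letting $\varepsilon\downarrow0$ removes the $\mathrm O(\varepsilon)$ term and yields the claimed $\mathrm o(d^{-1})$ expansion. The only genuinely delicate point is the sharp moderate-deviation bound $\mathbb P(\vert\overline{Y}_{d}-\mu\vert>\delta)=\mathrm o(d^{-r})$; everything else is bookkeeping around the second-order Taylor expansion.
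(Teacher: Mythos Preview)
Your proposal is correct and follows essentially the same route as the paper: second-order Taylor expansion on a $\delta$-neighbourhood, H\"older with the conjugate pair $(s,r)$ against hypothesis~(\ref{ss}) for the bad-event term, Rosenthal for the centred moments, and the crucial tail estimate $\mathbb P(|\overline{Y}_d-\mu|>\delta)=\mathrm o(d^{-r})$ under $\mathbb E|Y|^{r+1}<\infty$. The only difference is that the paper obtains this last bound by a direct citation of Petrov's Theorem~28, whereas you sketch its proof via truncation plus Fuk--Nagaev; both are valid, and the remaining details (Cauchy--Schwarz for the linear correction, H\"older with exponent $(r+1)/2$ for the quadratic one) match yours up to cosmetic choices.
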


The consequences of Lemma \ref{LK} in terms of $p$-norm concentration
are summarized in the following proposition:
\begin{proposition}
\label{IFC}
Fix $p>0$ and assume that $0<\mathbb E \vert X\vert ^{m}<\infty$ for $m=\max(4,3p)$. Then, as $d\to\infty$,
\begin{equation*}
\mathbb E  \Vert \bX\Vert _{p}  =d^{1/p}\mu_{p}^{1/p}+\emph{O}(d^{1/p-1})
\label{d8}%
\end{equation*}
and%
\begin{equation*}
\emph{Var} \Vert \bX\Vert _{p}  =\frac{\mu_{p}^{2/p-2}%
\sigma_{p}^{2}}{d^{1-2/p}p^{2}}+\emph{o}(  d^{-1+2/p}),
\label{d7}%
\end{equation*}
which implies
\begin{equation*}
\frac{\sqrt{d\,\emph{Var}  \Vert \bX\Vert
_{p}  }}{\mathbb E  \Vert \bX\Vert _{p} }Ê\to \frac
{\sigma_{p}}{p\mu_{p}},\text{ as } d \to \infty.
\end{equation*}
\end{proposition}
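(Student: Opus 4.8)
The plan is to transfer everything onto the empirical mean $\overline{Y}_d = d^{-1}\sum_{j=1}^{d} Y_j$ of the i.i.d.\ variables $Y_j = |X_j|^p$, which have mean $\mu_p = \mathbb{E}|X|^p$ and variance $\sigma_p^2 = \text{Var}|X|^p$, and then to apply Lemma \ref{LK} twice. Writing $\Vert\bX\Vert_p = d^{1/p}\,\overline{Y}_d^{1/p}$, I would take $\varphi(u) = u^{1/p}$ and $\psi(u) = u^{2/p}$, so that $\mathbb{E}\Vert\bX\Vert_p = d^{1/p}\,\mathbb{E}\varphi(\overline{Y}_d)$ and $\mathbb{E}\Vert\bX\Vert_p^2 = d^{2/p}\,\mathbb{E}\psi(\overline{Y}_d)$. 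Before invoking the lemma I would record that the hypothesis $0 < \mathbb{E}|X|^m < \infty$ forces $\mu_p > 0$, so that $\varphi$ and $\psi$ are twice continuously differentiable in a neighborhood of $\mu_p$, and that $\sigma_p^2 < \infty$ since $2p \leq m$. Here $\varphi''(\mu_p) = \frac{1-p}{p^2}\mu_p^{1/p-2}$ and $\psi''(\mu_p) = \frac{2(2-p)}{p^2}\mu_p^{2/p-2}$.

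The key observation for checking the hypotheses is that the single choice $r = s = 2$ serves both functions and recovers exactly the stated exponent $m = \max(4,3p)$. Indeed, condition (\ref{rr}) reads $\mathbb{E}|Y|^{3} = \mathbb{E}|X|^{3p} < \infty$, which holds because $3p \leq m$; and condition (\ref{ss}) with $s=2$ requires that $\limsup_{d}\mathbb{E}\,\overline{Y}_d^{2/p}$ and $\limsup_{d}\mathbb{E}\,\overline{Y}_d^{4/p}$ be finite. I would settle these by Jensen's inequality applied to $u\mapsto u^{q}$: when $q \geq 1$ convexity gives $\mathbb{E}\,\overline{Y}_d^{q} \leq \mathbb{E}\,Y^{q} = \mathbb{E}|X|^{pq}$, which is finite since $pq \in \{2,4\}\subseteq[0,m]$, while when $q<1$ concavity gives $\mathbb{E}\,\overline{Y}_d^{q} \leq \mu_p^{q}$, a constant. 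Lemma \ref{LK} then yields $\mathbb{E}\varphi(\overline{Y}_d) = \mu_p^{1/p} + \frac{\varphi''(\mu_p)\sigma_p^2}{2d} + \text{o}(d^{-1})$ and $\mathbb{E}\psi(\overline{Y}_d) = \mu_p^{2/p} + \frac{\psi''(\mu_p)\sigma_p^2}{2d} + \text{o}(d^{-1})$. Multiplying the first by $d^{1/p}$ gives $\mathbb{E}\Vert\bX\Vert_p = d^{1/p}\mu_p^{1/p} + \text{O}(d^{1/p-1})$, which is the first assertion.

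For the variance I would write $\text{Var}\Vert\bX\Vert_p = d^{2/p}\mathbb{E}\psi(\overline{Y}_d) - \big(d^{1/p}\mathbb{E}\varphi(\overline{Y}_d)\big)^2$ and substitute the two expansions. The leading terms $d^{2/p}\mu_p^{2/p}$ cancel, and collecting the surviving $d^{2/p-1}$ contributions — the term $\tfrac{\psi''(\mu_p)\sigma_p^2}{2}$ from $\mathbb{E}\Vert\bX\Vert_p^2$ against the cross term $2\mu_p^{1/p}\cdot\tfrac{\varphi''(\mu_p)\sigma_p^2}{2}$ from squaring the mean — the coefficient collapses to $\frac{(2-p)-(1-p)}{p^2}\mu_p^{2/p-2}\sigma_p^2 = \frac{1}{p^2}\mu_p^{2/p-2}\sigma_p^2$, the claimed constant. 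I expect this cancellation to be the main obstacle: one must confirm that the $\text{o}(d^{-1})$ error in the mean, once carried through the cross term $2\,d^{1/p}\mu_p^{1/p}\cdot d^{1/p}\text{o}(d^{-1})$, remains $\text{o}(d^{2/p-1})$ and hence does not perturb the exact coefficient, while the squared second-order term is merely $\text{O}(d^{2/p-2})$ and is negligible. Finally, the third assertion is immediate: $d\,\text{Var}\Vert\bX\Vert_p \sim \mu_p^{2/p-2}\sigma_p^2\,d^{2/p}/p^2$ and $\mathbb{E}\Vert\bX\Vert_p \sim \mu_p^{1/p}d^{1/p}$, so $\sqrt{d\,\text{Var}\Vert\bX\Vert_p}/\mathbb{E}\Vert\bX\Vert_p \to \sigma_p/(p\mu_p)$.
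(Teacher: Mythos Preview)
Your proof is correct and follows essentially the same route as the paper: reduce to $\overline{Y}_d$, apply Lemma~\ref{LK} with $r=s=2$ to $\varphi_1(u)=|u|^{1/p}$ and $\varphi_2(u)=|u|^{2/p}$, and subtract to obtain the variance. The only minor difference is that the paper verifies condition~(\ref{ss}) by invoking Corollary~\ref{C1} (which yields actual convergence of $\mathbb{E}\,\overline{Y}_d^{2/p}$ and $\mathbb{E}\,\overline{Y}_d^{4/p}$), whereas you bound these quantities directly via Jensen's inequality --- a slightly more elementary but equally valid check.
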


Proposition \ref{IFC} shows that for a fixed large $d$, the {\it relative standard deviation} evolves
with $p$ as the ratio $\sigma_p/(p\mu_p)$. For instance, when the distribution of $X$ is uniform, 
$$\mu_p=\frac{1}{p+1}\quad \text{and} \quad \sigma_p=\frac{p}{p+1}\sqrt{\frac{1}{2p+1}}.$$
In that case, we conclude that
\begin{equation*}
\frac{\sqrt{d\,\mbox{Var}  \Vert \bX\Vert
_{p}  }}{\mathbb E \Vert \bX\Vert _{p}}Ê\to 
\sqrt{\frac{1}{2p+1}}.
\end{equation*}
Thus, in the uniform setting, the limiting {\it relative standard deviation} is a strictly decreasing function of $p$.
This observation is often interpreted by saying that $p$-norms are more concentrated for larger values of $p$. 
There are however distributions for which this is not the case. A counterexample is given by a balanced mixture of
two standard Gaussian random variables with mean $1$ and $-1$, respectively \citep[see][page 881]{FrWeVe07}. In that case,
it can be seen that the asymptotic {\it relative standard deviation} with $p\leq1$ is smaller than 
for values of $p\in[8,30]$, making fractional norms more concentrated.
\begin{proof}[Proposition \ref{IFC}]
Fix $p>0$ and introduce the functions on $\mathbb{R}$
\[
\varphi_{1}(  u)  =\vert u\vert ^{1/p}\quad \text{and}\quad \varphi_{2}( u)  =\vert u\vert ^{2/p}.
\]
Assume that $\mathbb E\vert X\vert
^{\max(  4,p)  }<\infty$. Applying Corollary \ref{C1} we get that, as $d\to \infty$,
\[
\mathbb E\left(  \frac{\sum_{j=1}^{d}\vert X_{j}\vert ^{p}}{d}\right)
^{2/p}\to\mu_{p}^{2/p}
\]
and%
\[
\mathbb E\left(  \frac{\sum_{j=1}^{d}\vert X_{j}\vert ^{p}}{d}\right)
^{4/p}\to\mu_{p}^{4/p}.
\]
This says that with $r=2$ and $s=2$, for $i=1,2$,
\[
\limsup_{d\to\infty}\mathbb E  \left \vert \varphi_{i}\left(  \frac{\sum_{j=1}%
^{d}\vert X_{j}\vert ^{p}}{d}\right)Ê\right \vert^s <\infty.%
\]
Now, let $Y=\vert X\vert ^{p}$. If we also assume that $\mathbb E\vert Y\vert ^{r+1}=\mathbb E\vert
Y\vert ^{3}=\mathbb E\vert X\vert ^{3p}<\infty$, we get by applying
Lemma \ref{LK} to $\varphi_{1}$ and $\varphi_{2}$ that for $i=1,2$%
\[
\mathbb E\varphi_{i}(  \overline{Y}_d)  =\varphi_{i}(  \mu_p)
+\frac{\varphi_{i}^{\prime\prime}(  \mu_p)  \sigma_p^{2}}{2d}+\text{o}(
d^{-1})  .
\]
Thus, whenever $\mathbb E\vert X\vert ^{m}<\infty$, where $m=\max(
4,3p)$, 
\begin{equation*}
\mathbb E \vert \overline{Y}_d\vert  ^{1/p}  =\mu_{p}^{1/p}+\frac
{1}{p}\left(  \frac{1-p}{p}\right)  \frac{\mu_{p}^{1/p-2}\sigma_{p}^{2}}%
{2d}+\text{o}(  d^{-1}) \label{p1}%
\end{equation*}
and
\begin{equation*}
\mathbb E \vert  \overline{Y}_d\vert  ^{2/p}=\mu_{p}^{2/p}+\frac
{1}{p}\left(  \frac{2-p}{p}\right)  \frac{\mu_p^{2/p-2}\sigma_{p}^{2}}%
{d}+\text{o}\left(  d^{-1}\right).
\label{p2}%
\end{equation*}
Therefore, we see that
\begin{align*}
\text{Var} \vert \overline{Y}_d\vert ^{1/p} &=\mathbb E \vert
\overline{Y}_d\vert  ^{2/p}  -\mathbb E ^{2} \vert \overline{Y}_d\vert
^{1/p}\nonumber\\
&=\frac{\mu_{p}^{2/p-2}\sigma_{p}^{2}}{dp^{2}}+\text{o}\left(  d^{-1}\right)
.
\label{p3}%
\end{align*}
The identity $\overline{Y}_d=d^{-1}\sum_{j=1}^d \vert X_j\vert ^p$ yields the desired results.
\qed
\end{proof}

We conclude the section with a corollary, which specifies inequality (\ref{TCI}).
\begin{corollary}
\label{fh}
Fix $p>0$.
\begin{enumerate}[$(i)$]
\item If $0<\mathbb E \vert X\vert ^{m}<\infty$ for $m=\max(4,3p)$, then, for all $\varepsilon >0$,
$$
\mathbb P \left \{ \left |\frac{\Vert\bX\Vert_p}{\mathbb E \Vert\bX\Vert_p}-1\right|\geq\varepsilon\right\} \leq \frac{\sigma_p^2}{\varepsilon ^2dp^2\mu_p^2}+\emph{o}(d^{-1}).
$$
\item If for some positive constant $C$, $0<|X|\leq C$ almost surely, then, for $p\geq 1$ and all $\varepsilon >0$,
$$
\mathbb P \left\{ \left |\frac{\Vert\bX\Vert_p}{\mathbb E \Vert\bX\Vert_p} -1\right|\geq \varepsilon \right\}\leq 2\exp \left (- \frac{\varepsilon ^2 d^{2/p-1}\mu_p^{2/p}}{2C^2}+\emph{o}(d^{2/p-1})\right).
$$
\end{enumerate}
\end{corollary}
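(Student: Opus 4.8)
The plan is to handle the two parts with different tools: part $(i)$ is a quantitative sharpening of the Chebyshev bound already displayed in (\ref{TCI}), while part $(ii)$ calls for an exponential concentration inequality of bounded-differences type.

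For $(i)$, I would start from the chain (\ref{TCI}), which gives, for every $\varepsilon>0$,
$$\mathbb P\left\{\left|\frac{\Vert\bX\Vert_p}{\mathbb E\Vert\bX\Vert_p}-1\right|\geq\varepsilon\right\}\leq\frac{\text{Var}\Vert\bX\Vert_p}{\varepsilon^2\,\mathbb E^2\Vert\bX\Vert_p}.$$
Since the moment hypothesis is exactly that of Proposition \ref{IFC}, I may substitute the two expansions $\mathbb E\Vert\bX\Vert_p=d^{1/p}\mu_p^{1/p}+\text{O}(d^{1/p-1})$ and $\text{Var}\Vert\bX\Vert_p=\mu_p^{2/p-2}\sigma_p^2/(d^{1-2/p}p^2)+\text{o}(d^{-1+2/p})$. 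Squaring the first gives $\mathbb E^2\Vert\bX\Vert_p=d^{2/p}\mu_p^{2/p}(1+\text{O}(d^{-1}))$, so on forming the quotient the powers of $d$ and of $\mu_p$ collapse, the leading term becomes $\sigma_p^2/(dp^2\mu_p^2)$ with remainder of order $\text{o}(d^{-1})$, and dividing by $\varepsilon^2$ yields the claimed bound. This step is entirely routine once Proposition \ref{IFC} is granted.

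For $(ii)$, I would invoke McDiarmid's bounded-differences inequality applied to $f(\bx)=\Vert\bx\Vert_p$, viewed as a function of the $d$ independent coordinates. The key observation is that for $p\geq1$ the map $\Vert\cdot\Vert_p$ is a genuine norm, so the reverse triangle inequality gives $\bigl|\Vert\bx\Vert_p-\Vert\bx'\Vert_p\bigr|\leq\Vert\bx-\bx'\Vert_p$; when $\bx$ and $\bx'$ differ only in coordinate $i$ this equals $|x_i-x_i'|\leq 2C$ under the boundedness assumption, whence $f$ has bounded differences with each $c_i=2C$ and $\sum_{i=1}^d c_i^2=4dC^2$. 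McDiarmid's inequality then gives $\mathbb P\{|\Vert\bX\Vert_p-\mathbb E\Vert\bX\Vert_p|\geq t\}\leq 2\exp(-t^2/(2dC^2))$. Choosing $t=\varepsilon\,\mathbb E\Vert\bX\Vert_p$ and inserting $\mathbb E^2\Vert\bX\Vert_p=d^{2/p}\mu_p^{2/p}+\text{O}(d^{2/p-1})$ turns the exponent into $-\varepsilon^2 d^{2/p-1}\mu_p^{2/p}/(2C^2)+\text{o}(d^{2/p-1})$, as required (note $\text{O}(d^{2/p-2})=\text{o}(d^{2/p-1})$).

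I do not expect a genuinely hard step here: both parts reduce to feeding the rates of Proposition \ref{IFC} into a standard inequality. The only point demanding care is the bounded-differences constant in $(ii)$, and in particular the role of the hypothesis $p\geq1$: it is precisely the triangle inequality, which fails for the prenorm case $0<p<1$, that licenses the Lipschitz control $\bigl|\Vert\bx\Vert_p-\Vert\bx'\Vert_p\bigr|\leq\Vert\bx-\bx'\Vert_p$, while the almost-sure bound $|X|\leq C$ is what keeps each $c_i$ finite. This explains why $(ii)$ is confined to $p\geq1$ and bounded $X$, whereas $(i)$ holds for all $p>0$ under a moment condition alone.
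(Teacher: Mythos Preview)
Your proposal is correct and matches the paper's own proof essentially line for line: part $(i)$ is Chebyshev plus the rates of Proposition \ref{IFC}, and part $(ii)$ is McDiarmid's bounded-differences inequality with $c_i=2C$ obtained via the triangle (Minkowski) inequality for $p\geq 1$, followed by the substitution $\mathbb E\Vert\bX\Vert_p=d^{1/p}\mu_p^{1/p}+\text{O}(d^{1/p-1})$ from Proposition \ref{IFC}. Your remark on why $p\geq 1$ is needed in $(ii)$ is exactly the point the paper makes.
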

\begin{proof} 
Statement $(i)$ is an immediate consequence of Proposition \ref{IFC} and  Chebyshev's inequality. Now, assume that $p \geq 1$, and let $A=[-C,C]$. For $\bx=(x_1, \hdots, x_d) \in \mathbb R^d$, let $g:A^d\to \mathbb R$ be defined by
$$g(\bx)=\Vert\bx\Vert_p=\left(\sum_{j=1}^d |x_j|^p\right)^{1/p}.$$
Clearly, for each $1 \leq j\leq d$,
\begin{align*}
&\sup_{\stackrel{(x_1, \hdots, x_d) \in A^d}{x'_j \in A}} \left | g(x_1, \hdots, x_d)-g(x_1, \hdots, x_{j-1}, x'_j, x_{j+1},\hdots, x_d)\right|\\
& \quad = \sup_{\bx \in A^d, x'_j \in A} \left |  \Vert\bx\Vert_p-\Vert\bx'\Vert_p\right|,
\end{align*}
where $\bx'$ is identical to $\bx$, except on the $j$-th coordinate where it takes the value $x'_j$. It follows, by Minkowski inequality (which is valid here since $p\geq 1$), that
\begin{align*}
&\sup_{\stackrel{(x_1, \hdots, x_d) \in A^d}{x'_j \in A}} \left | g(x_1, \hdots, x_d)-g(x_1, \hdots, x_{j-1}, x'_j, x_{j+1},\hdots, x_d)\right|\\
& \quad \leq \sup_{\stackrel{\bx \in A^d}{x'_j \in A}} \Vert\bx-\bx'\Vert_p\\
& \quad =\sup_{(x,x')\in A^2} |x-x'| \leq 2C.
\end{align*}
Consequently, using the bounded difference inequality \citep[][]{Mc89}, we obtain
\begin{align*}
\mathbb P \left\{ \left |\frac{\Vert\bX\Vert_p}{\mathbb E \Vert\bX\Vert_p} -1\right|\geq \varepsilon \right\}&=\mathbb P \bigg\{ \left |\,\Vert\bX\Vert_p-\mathbb E \Vert\bX\Vert_p\,\right| \geq \varepsilon \mathbb E \Vert\bX\Vert_p\bigg\}\\
& \leq 2 \exp \left (- \frac{2(\varepsilon \mathbb E\Vert\bX\Vert_p) ^2}{4dC^2}\right)\\
&  =2\exp \left (- \frac{\varepsilon ^2 d^{2/p-1}\mu_p^{2/p}}{2C^2}+\text{o}(d^{2/p-1})\right),
\end{align*}
where, in the last inequality, we used Proposition \ref{IFC}. This concludes the proof.
\qed
\end{proof}
\section{Minima and maxima}
Another important question arising in high-dimensional nearest neighbor
search analysis concerns the relative asymptotic behavior of the minimum and
maximum $p$-distances to the origin within a random sample. To be precise,
let $\mathbf{X}_{1},\hdots,\mathbf{X}_{n}$ be an i.i.d.~$\mathbf{X}$ sample,
where $\mathbf{X}=(X_{1},\hdots,X_{d})$ is as usual a $\mathbb{R}^{d}$%
-valued random vector with i.i.d.~$X$ components. We will be primarily
interested in this section in the asymptotic properties of the difference
(the {\it contrast}) $\max_{1\leq i\leq n}\Vert\mathbf{X}_{i}\Vert_{p}-\min_{1\leq
i\leq d}\Vert\mathbf{X}_{i}\Vert_{p}$. In other words, given a data set and
a fixed query point located---without loss of generality---at the origin, we
seek to analyze how much the distances to the farthest and nearest neighbors
differ.

Assume, to start with, that $n$ is fixed and only $d$ is allowed to grow. Then an
immediate application of the law of large numbers shows that, whenever $\mu
_{p}=\mathbb{E}|X|^{p}<\infty $, almost surely as $d\to \infty $, 
\begin{equation*}
d^{-1/p}\left( {\max_{1\leq i\leq n}\Vert \mathbf{X}_{i}\Vert
_{p}-\min_{1\leq i\leq n}\Vert \mathbf{X}_{i}\Vert _{p}}\right) \overset{%
\mathbb{P}}{\to}0.
\end{equation*}%
Moreover, if $0<\mu _{p}<\infty $, then 
\begin{equation*}
\frac{\max_{1\leq i\leq n}\Vert \mathbf{X}_{i}\Vert _{p}}{\min_{1\leq i\leq
n}\Vert \mathbf{X}_{i}\Vert _{p}}\overset{\mathbb{P}}{\to}1.
\end{equation*}%
The above ratio is sometimes called the {\it relative contrast} in the
computational learning literature. Thus, as $d$ becomes large, all
observations seem to be distributed at approximately the same $p$-distance
from the query point. The concept of nearest neighbor (measured by $p$%
-norms) in high dimension is therefore less clear than in small dimension, 
with resulting computational difficulties and algorithmic
inefficiencies.

These consistency results can be specified by means of asymptotic
distributions. Recall that if $Z_{1},\hdots,Z_{n}$ are i.i.d~standard normal
random variables, the sample range is defined to be 
\begin{equation*}
M_{n}=\max_{1\leq i\leq n}Z_{i}-\min_{1\leq i\leq n}Z_{i}.
\end{equation*}%
The asymptotic distribution of $M_{n}$ is well known (%
\citealp[see,
e.g.,][]{Da81}). Namely, for any $x$ one has%
\begin{align*}
& \lim_{n\to \infty }\mathbb P\left\{ \sqrt{2\log n}\left ( M_{n}-2\sqrt{%
2\log n}+\frac{\log \log n+\log 4\pi }{2\sqrt{2\log n}}\right) \leq
x\right\}  \\
& \quad =\int_{-\infty }^{\infty }\exp \left( -t-e^{-t}-e^{-(x-t)}\right) 
\text{\textrm{d}}t.
\end{align*}%
For future reference, we shall sketch the proof of this fact here. It is
well known that with 
\begin{equation}
a_{n}=\sqrt{2\log n}\quad \mbox{and}\quad b_{n}=\sqrt{2\log n}-\frac{1}{2}\frac{(\log
\log n+\log 4\pi )}{\sqrt{2\log n}}  \label{ab}
\end{equation}%
we have 
\begin{equation}
\left( a_{n}(\max_{1\leq i\leq n}Z_{i}-b_{n}),a_{n}(\min_{1\leq i\leq
n}Z_{i}+b_{n})\right) \overset{}{\to}(E,-E^{\prime }),  \label{EE}
\end{equation}%
where $E$ and $E^{\prime }$ are independent, $E$ $\overset{}{=}$ $E^{\prime }
$ and $\mathbb{P}\{E\leq x\}=\exp (-\exp (-x))$, $-\infty <x<\infty $. (\citealp[The
asymptotic independence of the maximum and minimum part can be inferred from
Theorem 4.2.8 of][]{Re89}, \citealp[and the asymptotic distribution part from
Example 2 on page 71 of][]{Re87}.) From (\ref{EE}) we get 
\begin{equation*}
a_{n}(\max_{1\leq i\leq n}Z_{i}-\min_{1\leq i\leq n}Z_{i})-2a_{n}b_{n}%
\overset{}{\overset{\mathcal{D}}{\to}}E+E^{\prime }.
\end{equation*}
Clearly, 
\begin{align*}
\mathbb{P}\{E+E^{\prime}\leq x\} & =\int_{-\infty}^{\infty}\exp\left(
-e^{-(x-t)}\right) \exp(-e^{-t})e^{-t}\text{\textrm{d}}t \\
& =\int_{-\infty}^{\infty}\exp\left( -t-e^{-t}-e^{-(x-t)}\right) \text{%
\textrm{d}}t.
\end{align*}
Our first result treats the case when $n$ is fixed and $d\to\infty$.

\begin{proposition}
\label{tcl} Fix $p>0$ and assume that $0<\mathbb E |X|^{p}<\infty $. Then, for fixed $%
n$, as $d\to\infty $, 
\begin{equation*}
d^{1/2-1/p}\left( {\max_{1\leq i\leq n}\Vert \mathbf{X}_{i}\Vert
_{p}-\min_{1\leq i\leq n}\Vert \mathbf{X}_{i}\Vert _{p}}\right) \overset{%
\mathcal{D}}{\to}\frac{\sigma _{p}\mu _{p}^{1/p-1}}{p}M_{n}.
\end{equation*}%
\end{proposition}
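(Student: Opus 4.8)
The plan is to reduce the statement to a continuous-mapping argument built on a joint central limit theorem together with the delta method. Writing $X_{ij}$ for the $j$-th coordinate of $\mathbf{X}_i$ and setting
\[
\overline{Y}_{i,d}=\frac{1}{d}\sum_{j=1}^{d}|X_{ij}|^{p},\qquad i=1,\dots,n,
\]
we have $\Vert\mathbf{X}_i\Vert_p=d^{1/p}\,\overline{Y}_{i,d}^{\,1/p}$. Since the constant $\mu_p^{1/p}$ is common to all $i$, it cancels in any difference of order statistics; so, putting $W_{i,d}=\sqrt{d}\,\bigl(\overline{Y}_{i,d}^{\,1/p}-\mu_p^{1/p}\bigr)$, a one-line computation gives the exact identity
\[
d^{1/2-1/p}\Bigl(\max_{1\le i\le n}\Vert\mathbf{X}_i\Vert_p-\min_{1\le i\le n}\Vert\mathbf{X}_i\Vert_p\Bigr)=\max_{1\le i\le n}W_{i,d}-\min_{1\le i\le n}W_{i,d}.
\]
Thus it suffices to find the joint limiting law of $(W_{1,d},\dots,W_{n,d})$ and then apply the range functional.

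First I would establish the joint weak limit of $(W_{1,d},\dots,W_{n,d})$. Each $\overline{Y}_{i,d}$ is an average of the i.i.d.~terms $|X_{ij}|^{p}$, which have mean $\mu_p$ and variance $\sigma_p^2$; here one needs $\sigma_p^2=\text{Var}\,|X|^{p}<\infty$ (the presence of $\sigma_p$ in the conclusion forces this, so the moment hypothesis must be read as guaranteeing $0<\sigma_p^2<\infty$). Because $\mathbf{X}_1,\dots,\mathbf{X}_n$ are independent, the $\overline{Y}_{i,d}$ are independent across $i$, so applying the univariate CLT coordinatewise yields
\[
\sqrt{d}\,\bigl(\overline{Y}_{1,d}-\mu_p,\dots,\overline{Y}_{n,d}-\mu_p\bigr)\ \overset{\mathcal{D}}{\to}\ \sigma_p\,(Z_1,\dots,Z_n),
\]
with $Z_1,\dots,Z_n$ i.i.d.~standard normal. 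Applying the delta method with $\varphi(u)=u^{1/p}$, for which $\varphi'(\mu_p)=\frac{1}{p}\mu_p^{1/p-1}$, converts this into
\[
(W_{1,d},\dots,W_{n,d})\ \overset{\mathcal{D}}{\to}\ \frac{\sigma_p\,\mu_p^{1/p-1}}{p}\,(Z_1,\dots,Z_n).
\]

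Finally I would invoke the continuous mapping theorem with the map $(w_1,\dots,w_n)\mapsto\max_i w_i-\min_i w_i$, which is continuous on $\mathbb{R}^n$. By positive homogeneity of the range, the constant $c=\sigma_p\mu_p^{1/p-1}/p>0$ factors out, giving
\[
\max_{1\le i\le n}W_{i,d}-\min_{1\le i\le n}W_{i,d}\ \overset{\mathcal{D}}{\to}\ \frac{\sigma_p\,\mu_p^{1/p-1}}{p}\Bigl(\max_{1\le i\le n}Z_i-\min_{1\le i\le n}Z_i\Bigr)=\frac{\sigma_p\,\mu_p^{1/p-1}}{p}\,M_n,
\]
which is the assertion. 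The genuinely delicate points are bookkeeping rather than deep: one must verify the \emph{joint} (not merely marginal) convergence, which is immediate from independence across $i$, and check that the delta method is applied legitimately to the whole vector, which holds because $\varphi$ acts coordinatewise on an asymptotically normal vector. I expect the main obstacle to be stating the hypotheses cleanly, namely securing $0<\sigma_p^2<\infty$ so that the CLT step—and hence the entire argument—is meaningful.
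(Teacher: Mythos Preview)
Your proposal is correct and follows essentially the same route as the paper: a joint CLT for the vector $(\Vert\mathbf{X}_1\Vert_p^p/d,\dots,\Vert\mathbf{X}_n\Vert_p^p/d)$, the delta method with $u\mapsto u^{1/p}$ applied coordinatewise, and then the continuous mapping theorem for the range functional. Your observation that the argument tacitly requires $\sigma_p^2=\text{Var}\,|X|^p<\infty$ (hence $\mathbb{E}|X|^{2p}<\infty$) rather than merely $\mathbb{E}|X|^p<\infty$ is well taken; the paper's proof relies on the same CLT step without making this explicit.
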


To our knowledge, this is the first statement of this type in the analysis
of high-dimensional nearest neighbor problems. In fact, most of the existing
results merely bound the asymptotic expectation of the (normalized)
difference and ratio between the max and the min, but with bounds which are
unfortunately not of the same order in $n$ as soon as $n\geq 3$ 
\citep[see,
e.g., Theorem 3 in][]{HiAgKe00}.

One of the consequences of Proposition \ref{tcl} is that, for fixed $n$, the
difference between the farthest and nearest neighbors does not necessarily
go to zero in probability as $d$ tends to infinity. Indeed, we see that the size of 
\begin{equation*}
{\max_{1\leq i\leq n}\Vert \mathbf{X}_{i}\Vert _{p}-\min_{1\leq i\leq
n}\Vert \mathbf{X}_{i}\Vert _{p}}
\end{equation*}
grows as $d^{1/p-1/2}$. For example, this difference increases with
dimensionality as $\sqrt{d}$ for the $L^{1}$ (Manhattan) metric and remains
stable in distribution for the $L^{2}$ (Euclidean) metric. It tends to
infinity  in probability for $p<2$ and to zero for $p>2$. This observation is in line with
the conclusions of \citet{HiAgKe00}, who argue that nearest neighbor search
in a high-dimensional space tends to be meaningless for norms with larger
exponents, since the maximum observed distance tends towards the minimum
one. It should be noted, however, that the variance of the limiting
distribution depends on the value of $p$.
\begin{remark}
Let $Z_{1},\hdots,Z_{n}$ be i.i.d~standard normal random variables, and let 
\begin{equation*}
R_{n}=\frac{\max_{1\leq i\leq n} Z_{i}}{\min_{1\leq i\leq n} Z_{i}}%
.
\end{equation*}%
Assuming $\mu _{p}>0$, one can prove, using the same technique, that 
\begin{equation*}
\frac{\max_{1\leq i\leq n}\Vert \mathbf{X}_{i}\Vert _{p}-d^{1/p}\mu _{p}}{%
\min_{1\leq i\leq n}\Vert \mathbf{X}_{i}\Vert _{p}-d^{1/p}\mu _{p}}\overset%
{}{\overset{\mathcal{D}}{\to}}R_{n}.
\end{equation*}%
\end{remark}
\begin{proof}[Proposition \ref{tcl}] Denote by $\mathbf{Z}_{n}$ a centered Gaussian random vector
in $\mathbb{R}^{n}$, with identity covariance matrix. By the central limit
theorem, as $d\to \infty $, 
\begin{equation*}
\sqrt{d}\left[ \left( \frac{\Vert \mathbf{X}_{1}\Vert _{p}^{p}}{d},\hdots,%
\frac{\Vert \mathbf{X}_{n}\Vert _{p}^{p}}{d}\right) -(\mu _{p},\hdots,\mu
_{p})\right] \overset{}{\overset{\mathcal{D}}{\to}}\sigma _{p}{}%
\mathbf{Z}_{n}.
\end{equation*}%
Applying the delta method with the mapping $f(x_{1},\hdots%
,x_{n})=(x_{1}^{1/p},\hdots,x_{n}^{1/p})$ (which is differentiable at $(\mu
_{p},\hdots,\mu _{p})$ since $\mu _{p}>0$), we obtain 
\begin{equation*}
\sqrt{d}\left[ \left( \frac{\Vert \mathbf{X}_{1}\Vert _{p}}{d^{1/p}},\hdots,%
\frac{\Vert \mathbf{X}_{n}\Vert _{p}}{d^{1/p}}\right) -(\mu _{p}^{1/p},\hdots%
,\mu _{p}^{1/p})\right] \overset{}{\overset{\mathcal{D}}{\to}}\frac{%
\sigma _{p}\mu _{p}^{1/p-1}}{p}\mathbf{Z}_{n}.
\end{equation*}%
Thus, by continuity of the maximum and minimum functions, 
\begin{equation*}
d^{1/2-1/p}\left( {\max_{1\leq i\leq n}\Vert \mathbf{X}_{i}\Vert
_{p}-\min_{1\leq i\leq n}\Vert \mathbf{X}_{i}\Vert _{p}}\right) \overset{%
\mathcal{D}}{\to}\frac{\sigma _{p}\mu _{p}^{1/p-1}}{p}M_{n}.
\end{equation*}%
\qed
\end{proof}

In the previous analysis, $n$ (the sample size) was fixed whereas $d$ (the
dimension) was allowed to grow to infinity. A natural question that arises
concerns the impact of letting $n$ be a function of $d$ such that $n$ tends
to infinity as $d\to \infty $ \citep[][]{Ma72}. Proposition \ref{Yu} below offers a
first answer.
\begin{proposition}
\label{Yu} Fix $p\geq 1$, and assume that $0<\mathbb{E}|X|^{3p}<\infty$ and $\sigma_p>0$. 
For any sequence of positive integers $%
\left\{ n( d) \right\} _{d\geq 1}$ converging to infinity and 
satisfying 
\begin{equation}
n( d) =\emph{o}\left( \frac{d^{1/5}}{\log^{6/5} d}\right) ,\text{
as }d\to \infty \text{,}  \label{nc}
\end{equation}%
we have%
\begin{equation*}
\frac{pa_{n( d) }d^{1/2-1/p}}{\mu _{p} ^{1/p-1}\sigma_p}%
\left( \max_{1\leq i\leq n( d) }\Vert {\mathbf{X}_{i}}\Vert
_{p}-\min_{1\leq i\leq n( d) }\Vert {\mathbf{X}_{i}}\Vert
_{p}\right) \overset{}{-2a_{n( d) }b_{n( d) }\overset{%
\mathcal{D}}{\to}}E+E^{\prime }\text{,}
\end{equation*}%
where $a_{n}$ and $b_n$ are as in (\ref{ab}), and $E$ and $E^{\prime }$ are as in (\ref{EE}).
\end{proposition}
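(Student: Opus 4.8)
The plan is to linearize each $\Vert\mathbf{X}_i\Vert_p$ about its leading term, so that the normalized contrast becomes the range of $n(d)$ independent, approximately standard normal sums, and then to transport the classical Gaussian extreme-value limit (\ref{EE}) to these sums by means of a rate of normal approximation. First I would set $Y_{ij}=|X_{ij}|^p$, $\overline Y_{i,d}=d^{-1}\sum_{j=1}^d Y_{ij}$ and $W_i=\sqrt d\,(\overline Y_{i,d}-\mu_p)/\sigma_p$, so that $W_1,\hdots,W_{n(d)}$ are i.i.d.~standardized sums with mean $0$ and variance $1$. Since $\Vert\mathbf{X}_i\Vert_p=d^{1/p}\overline Y_{i,d}^{1/p}=d^{1/p}(\mu_p+\sigma_p W_i/\sqrt d)^{1/p}$ and $\mu_p>0$, a second-order Taylor expansion of $u\mapsto u^{1/p}$ at $\mu_p$ gives
\[
T_i:=\frac{p\,d^{1/2-1/p}}{\mu_p^{1/p-1}\sigma_p}\left(\Vert\mathbf{X}_i\Vert_p-d^{1/p}\mu_p^{1/p}\right)=W_i+R_i,\qquad R_i=\mathrm{O}\!\left(\frac{W_i^2}{\sqrt d}\right),
\]
the bound on $R_i$ being valid on the event $\{|W_i|\leq\varepsilon\sqrt d\}$. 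Because the centering $d^{1/p}\mu_p^{1/p}$ cancels in a difference, the quantity in the statement is exactly $a_{n(d)}(\max_i T_i-\min_i T_i)-2a_{n(d)}b_{n(d)}$.

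The second step is to discard the remainders. Using $|\max_i T_i-\max_i W_i|\leq\max_i|R_i|$ (and likewise for the minimum), together with $\max_{1\leq i\leq n(d)}|W_i|=\mathrm{O}_{\mathbb P}(\sqrt{\log n(d)})$, the error introduced by replacing $T_i$ by $W_i$ is at most $a_{n(d)}\max_i|R_i|$, which is of order $(\log n(d))^{3/2}/\sqrt d$ and vanishes under (\ref{nc}). It therefore suffices to prove the joint convergence
\[
\left(a_{n(d)}\big(\max_i W_i-b_{n(d)}\big),\ a_{n(d)}\big(\min_i W_i+b_{n(d)}\big)\right)\overset{\mathcal D}{\to}(E,-E'),
\]
with $E,E'$ independent as in (\ref{EE}); subtracting the second coordinate from the first then yields $a_{n(d)}(\max_i W_i-\min_i W_i)-2a_{n(d)}b_{n(d)}\overset{\mathcal D}{\to}E+E'$, exactly as in the Gaussian computation preceding Proposition~\ref{tcl}.

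The heart of the argument, and its principal obstacle, is this last display: the $W_i$ are only approximately Gaussian, whereas extreme-value limits are dictated by the behavior of the upper (and lower) tail at the moving threshold $t_{n}=b_{n}+x/a_{n}\sim\sqrt{2\log n}$. Hence the central limit theorem alone, which sufficed for the fixed-$n$ Proposition~\ref{tcl}, is no longer enough; what is required is that the \emph{relative} error of the normal approximation be negligible at this growing level, i.e.\ $n(d)\,\mathbb P\{W_1>t_{n(d)}\}\to e^{-x}$ and the symmetric lower-tail statement, even though $1-\Phi(t_{n(d)})$ is itself only of order $1/n(d)$. This is where the hypothesis $\mathbb E|X|^{3p}<\infty$ is used: it permits a (non-uniform) Berry--Esseen estimate---equivalently, a one-term quantile coupling of each $W_i$ with a standard normal $N_i$ at rate $|W_i-N_i|=\mathrm{O}((1+N_i^2)/\sqrt d)$---and, crucially, since a third moment does not grant Cram\'er's condition, only such polynomial control is available. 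The growth restriction (\ref{nc}) is precisely what is needed to force the resulting approximation error, accumulated over the $n(d)$ coordinates and magnified by the factor $a_{n(d)}$, to tend to zero; it is also what lets the asymptotic independence of the maximum and minimum transfer from the Gaussian case (cited around (\ref{EE})) to the $W_i$. Once the tail comparison and the asymptotic independence are in force, the two displays above combine to give the claimed limit.
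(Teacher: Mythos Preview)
Your strategy is sound and reaches the conclusion, but the central technical device is genuinely different from the paper's. The paper views $\mathbf Y_n=(W_1,\hdots,W_n)$ as a sum over $j=1,\hdots,d$ of i.i.d.\ $\mathbb R^n$-valued summands $\bm\xi_j=((|X_{j,1}|^p-\mu_p)/(\sqrt d\,\sigma_p),\hdots,(|X_{j,n}|^p-\mu_p)/(\sqrt d\,\sigma_p))$ and invokes Yurinskii's multivariate normal coupling: on a suitable probability space there is a standard Gaussian $\mathbf Z_n$ with $\mathbb P\{\|\mathbf Y_n-\mathbf Z_n\|_2>3\delta\}\leq CB(1+|\log B|/n)$, where $B=\beta n\delta^{-3}$ and $\beta=\sum_j\mathbb E\|\bm\xi_j\|_2^3\leq(n/d)^{3/2}d\,\mathbb E||X|^p-\mu_p|^3/\sigma_p^3$. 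Choosing $\delta=1/\log d$ and using $|\max_i x_i-\max_i y_i|\leq\|\mathbf x-\mathbf y\|_2$ transfers (\ref{EE}) to the $W_i$; only \emph{afterwards} is a two-term Taylor expansion applied to pass from $\|\mathbf X_i\|_p^p$ to $\|\mathbf X_i\|_p$. Your route is coordinate-wise (a one-dimensional non-uniform Berry--Esseen bound, exploiting the independence of the $W_i$) and performs the Taylor step first. This is more elementary and, if executed through the tail estimate $|n\,\mathbb P\{W_1>t_n\}-n(1-\Phi(t_n))|\leq Cn/(\sqrt d\,t_n^{3})$, would in fact allow $n(d)$ to grow essentially like $o(\sqrt d\,(\log d)^{3/2})$, considerably faster than (\ref{nc}); the paper's restriction (\ref{nc}) is exactly what the Yurinskii bound needs for $B\to0$.

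Two small cautions on your write-up. First, the quantile-coupling rate $|W_i-N_i|=\mathrm O((1+N_i^2)/\sqrt d)$ you quote is a KMT-type inequality that generally requires more than a bare third moment; under only $\mathbb E|X|^{3p}<\infty$ it is safer to argue directly with the non-uniform Berry--Esseen bound on $\mathbb P\{W_1>t_n\}$ rather than through a pointwise coupling. Second, the assertion $\max_i|W_i|=\mathrm O_{\mathbb P}(\sqrt{\log n(d)})$, which you invoke to discard the Taylor remainders \emph{before} the normal comparison, does not follow from crude moment bounds alone (those give only $\mathrm O_{\mathbb P}(n^{1/3})$); it should itself be established from the same Berry--Esseen tail comparison, so you may want to reorder slightly or state that step explicitly.
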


\begin{proof} In the following, we let $\delta ( d) =1/\log d$.
For future use note that 
\begin{equation}
\delta ^{2}( d) \log n( d) \to 0\quad \mbox{and} \quad \frac{n^{5}(d)}{d\delta ^{6}( d)} \to
0,\ \text{as }d\to \infty \text{.}  \label{con}
\end{equation}%
In the proof we shall often suppress the dependence of $n$ and $\delta $ on $%
d$. For $1 \leq i \leq n$, we set 
\begin{equation*}
{\mathbf{X}_{i}}=(X_{1,i},\hdots,X_{d,i})\quad \mbox{and}\quad \Vert {%
\mathbf{X}_{i}}\Vert _{p}^{p}=\sum_{j=1}^{d}|X_{j,i}|^{p}.
\end{equation*}%
We see that for $n\geq 1,$%
\begin{align*}
& \left( \frac{\Vert {\mathbf{X}_{1}}\Vert _{p}^{p}-d\mu _{p}}{\sqrt{d}\sigma_p},%
\hdots,\frac{\Vert {\mathbf{X}_{n}}\Vert _{p}^{p}-d\mu _{p}}{\sqrt{d}\sigma_p}%
\right)  \\
& \quad =\left( \frac{\sum_{j=1}^{d}|X_{j,1}|^{p}-d\mu _{p}}{\sqrt{d}\sigma_p},\hdots%
,\frac{\sum_{j=1}^{d}|X_{j,n}|^{p}-d\mu _{p}}{\sqrt{d}\sigma_p}\right)  \\
& \quad :=(Y_{1},\hdots,Y_{n})=\mathbf{Y}_{n}\in \mathbb{R}^{n}.
\end{align*}%
As above, let $\mathbf{Z}{_{n}}=(Z_{1},\hdots,Z_{n})$ be a centered Gaussian
random vector in $\mathbb{R}^{n}$, with identity covariance matrix. Write, for $1 \leq j \leq d$,
\begin{equation*}
\bm{\xi}_{j}=\left( \frac{|X_{j,1}|^{p}-\mu _{p}}{\sqrt{d}\sigma_p},\hdots,\frac{%
|X_{j,n}|^{p}-\mu _{p}}{\sqrt{d}\sigma_p}\right) 
\end{equation*}%
and note that $\sum_{j=1}^{d}\bm{\xi}_{j}=\mathbf{Y}{_{n}}$. 
Set $\beta =\sum_{j=1}^{d}\Vert \bm{\xi}_{j}\Vert _{2}^{3}$. Then, by Jensen's
inequality, 
\begin{equation*}
\mathbb{E}\Vert \bm{\xi} _{j}\Vert _{2}^{3}=\mathbb{E}\left( \frac{%
\sum_{i=1}^{n}\left( |X_{j,i}|^{p}-\mu _{p}\right) ^{2}}{d\sigma^2_p}\right)
^{3/2}\leq \left( \frac{n}{d\sigma_p^2}\right) ^{3/2}\mathbb{E}\left\vert
\,|X|^{p}-\mu _{p}\,\right\vert ^{3}.
\end{equation*}%
This gives that for any $\delta >0$, possibly depending upon $n$, 
\begin{equation*}
B:=\beta n\delta ^{-3}\leq \frac{n^{5/2}}{\sqrt{d}\sigma_p^3}\mathbb{E}\left\vert
\,|X|^{p}-\mu _{p}\,\right\vert ^{3}\delta ^{-3}.
\end{equation*}%
Applying a result of \citet{Yu77} as formulated in Section 4 of Chapter 10
of \citet{Po01} we get, on a suitable probability space depending on $\delta
>0$, there exist random vectors $\mathbf{Y}_{n}$ and ${}\mathbf{Z}_{n}$
satisfying 
\begin{equation}
\mathbb{P}\bigg\{ \Vert \mathbf{Y}_{n}-\mathbf{Z}_{n}\Vert _{2}>3\delta
\Big\} \leq CB\left( 1+\frac{\left\vert \log (B)\right\vert }{{n}}%
\right), \label{yp}
\end{equation}%
where $C$ is a universal constant. Using the fact that 
\begin{equation*}
\left\vert \max_{1\leq i\leq n}x_{i}-\max_{1\leq i\leq n}y_{i}\right\vert
\leq \sqrt{\sum_{i=1}^{n}\left( x_{i}-y_{i}\right) ^{2}},
\end{equation*}%
we get, for all $\varepsilon >0$,
$$\mathbb{P}\left\{ a_n\vert \max_{1\leq i\leq n}Y_{i}-\max_{1\leq i\leq n}Z_{i}\vert
>\varepsilon \right\} \leq  \mathbb{P}\Big\{ \sqrt{2 \log n}\,\Vert \mathbf{Y}_{n}-\mathbf{Z}_{n}\Vert
_{2}>\varepsilon \Big\}.$$
Thus, for all $d$ large enough,
\begin{align*}
\mathbb{P}\left\{ a_n\vert \max_{1\leq i\leq n}Y_{i}-\max_{1\leq i\leq n}Z_{i}\vert
>\varepsilon \right\} & \leq  \mathbb{P}\Big\{ \sqrt{2 \log n}\,\Vert \mathbf{Y}_{n}-\mathbf{Z}_{n}\Vert
_{2}>3 \delta \sqrt{2 \log n}\Big\} \\
& \quad (\mbox{since $\delta\sqrt{\log n} \to 0$ as $d\to \infty$})\\
&= \mathbb{P}\Big\{ \Vert \mathbf{Y}_{n}-\mathbf{Z}_{n}\Vert
_{2}>3 \delta\Big\}.
\end{align*}
From (\ref{yp}), we deduce that for all $\varepsilon >0$ and all $d$ large enough,
\begin{equation*}
\mathbb{P}\left\{ a_n\vert \max_{1\leq i\leq n}Y_{i}-\max_{1\leq i\leq n}Z_{i}\vert
>\varepsilon \right\}  \leq CB\left( 1+\frac{\left\vert \log (B)\right\vert }{{n}}%
\right) .  
\label{BB}
\end{equation*}
But, by our choice of $\delta(d)$ and (\ref{con}), 
\begin{equation*}
B\left( 1+\frac{\left\vert \log (B)\right\vert }{{n}}\right)
\to 0,
\end{equation*}%
so that
$$a_n\vert \max_{1\leq i\leq n}Y_{i}-\max_{1\leq i\leq n}Z_{i}\vert =\text{o}_{\mathbb{P}}(1).$$
Similarly, one proves that
$$a_n\vert \min_{1\leq i\leq n}Y_{i}-\min_{1\leq i\leq n}Z_{i}\vert =\text{o}_{\mathbb{P}}(1).$$
Thus, by (\ref{EE}), we conclude that
\begin{equation}
\left( a_{n}(\max_{1\leq i\leq n}Y_{i}-b_{n}),a_{n}(\min_{1\leq i\leq
n}Y_{i}+b_{n})\right) \overset{}{\overset{\mathcal{D}}{\to}}%
(E,-E^{\prime }).  \label{E}
\end{equation}%

Next, we have 
\begin{align*}
& \left( a_{n}(\max_{1\leq i\leq n}Y_{i}-b_{n}),a_{n}(\min_{1\leq i\leq
n}Y_{i}+b_{n})\right)  \\
& =\left( a_{n}\left( \frac{\max_{1\leq i\leq n}\Vert {\mathbf{X}_{i}}\Vert
_{p}^{p}}{\sqrt{d}\sigma_p}-\frac{\sqrt{d}\mu _{p}}{\sigma_p}-b_{n}\right) ,\text{ }a_{n}\left( \frac{%
\min_{1\leq i\leq n}\Vert {\mathbf{X}_{i}}\Vert _{p}^{p}}{\sqrt{d}\sigma_p}-\frac{\sqrt{d}%
\mu _{p}}{\sigma_p}+b_{n}\right) \right)  \\
& =\left( a_{n}\left( \frac{\max_{1\leq i\leq n}\Vert {\mathbf{X}_{i}}\Vert
_{p}^{p}}{\sqrt{d}\sigma_p}-\beta _{n}\right) ,\text{ }a_{n}\left( \frac{\min_{1\leq
i\leq n}\Vert {\mathbf{X}_{i}}\Vert _{p}^{p}}{\sqrt{d}\sigma_p}-\beta _{n}^{\prime
}\right) \right) ,
\end{align*}%
where $\beta _{n}=\frac{\sqrt{d}\mu _{p}}{\sigma_p}+b_{n}$ and $\beta _{n}^{\prime }=\frac{\sqrt{d}%
\mu _{p}}{\sigma_p}-b_{n}$. Note that $a_{n}\to \infty $ and (\ref{E}) imply
that both 
\begin{equation}
\frac{\max_{1\leq i\leq n}\Vert {\mathbf{X}_{i}}\Vert _{p}^{p}}{\sqrt{d}\sigma_p}%
-\beta _{n}\overset{\mathbb{P}}{\to}0\quad \mbox{and}\quad \frac{%
\min_{1\leq i\leq n}\Vert {\mathbf{X}_{i}}\Vert _{p}^{p}}{\sqrt{d}\sigma_p}-\beta
_{n}^{\prime }\overset{\mathbb{P}}{\to}0.  \label{zero}
\end{equation}%
Observe also that by a two term Taylor expansion, for a suitable $\widetilde{%
\beta }_{n}$ between $\beta _{n}$ and $({\max_{1\leq i\leq n}\Vert {\mathbf{X%
}_{i}}\Vert _{p}^{p}})/({\sqrt{d}\sigma_p})$, 
\begin{align*}
& \frac{pa_{n}}{\beta _{n}^{1/p-1}}\left( \left( \frac{\max_{1\leq i\leq
n}\Vert {\mathbf{X}_{i}}\Vert _{p}^{p}}{\sqrt{d}\sigma_p}\right) ^{1/p}-\beta
_{n}^{1/p}\right)  \\
& \quad =a_{n}\left( \frac{\max_{1\leq i\leq n}\Vert {\mathbf{X}_{i}}\Vert
_{p}^{p}}{\sqrt{d}\sigma_p}-\beta _{n}\right)  \\
& \qquad +\frac{a_{n}}{\beta _{n}^{1/p-1}}\frac{1-p}{2p}\,\widetilde{\beta }%
_{n}^{1/p-2}\left( \frac{\max_{1\leq i\leq n}\Vert {\mathbf{X}_{i}}\Vert
_{p}^{p}}{\sqrt{d}\sigma_p}-\beta _{n}\right) ^{2}.
\end{align*}%
We obtain by (\ref{E}) and (\ref{zero}) that 
\begin{equation*}
a_{n}^{2}\left( \frac{\max_{1\leq i\leq n}\Vert {\mathbf{X}_{i}}\Vert
_{p}^{p}}{\sqrt{d}\sigma_p}-\beta _{n}\right) ^{2}\frac{\widetilde{\beta }%
_{n}^{1/p-2}}{a_{n}\beta _{n}^{1/p-1}}=\text{O}_{\mathbb{P}}\left( \frac{1}{%
a_{n}\beta _{n}}\right) =\text{o}_{\mathbb{P}}(1).
\end{equation*}%
Similarly,
\begin{align*}
& \frac{pa_{n}}{\left( \beta _{n}^{\prime }\right) ^{1/p-1}}\left( \left( 
\frac{\min_{1\leq i\leq n}\Vert {\mathbf{X}_{i}}\Vert _{p}^{p}}{\sqrt{d}\sigma_p}%
\right) ^{1/p}-\left( \beta _{n}^{\prime }\right) ^{1/p}\right)  \\
& \quad =a_{n}\left( \frac{\min_{1\leq i\leq n}\Vert {\mathbf{X}_{i}}\Vert
_{p}^{p}}{\sqrt{d}\sigma_p}-\beta _{n}^{\prime }\right) +\text{o}_{\mathbb{P}}(1).
\end{align*}%
Keeping in mind that $\beta _{n}$ $/\beta _{n}^{\prime }\to 1$, we
get%
\begin{align*}
& \frac{pa_{n}}{\beta _{n}^{1/p-1}}\left( \left( \frac{\max_{1\leq i\leq
n}\Vert {\mathbf{X}_{i}}\Vert _{p}^{p}}{\sqrt{d}\sigma_p}\right) ^{1/p}-\beta
_{n}^{1/p},\left( \frac{\min_{1\leq i\leq n}\Vert {\mathbf{X}_{i}}\Vert
_{p}^{p}}{\sqrt{d}\sigma_p}\right) ^{1/p}-\left( \beta _{n}^{\prime }\right)
^{1/p}\right)  \\
& \quad \overset{}{\overset{\mathcal{D}}{\to}}(E,-E^{\prime })
\end{align*}%
and hence%
\begin{equation*}
\frac{pa_{n}}{\beta _{n}^{1/p-1}}\left( \frac{\max_{1\leq i\leq n}\Vert {%
\mathbf{X}_{i}}\Vert _{p}}{( \sqrt{d}\sigma_p) ^{1/p}}-\frac{\min_{1\leq
i\leq n}\Vert {\mathbf{X}_{i}}\Vert _{p}}{( \sqrt{d}\sigma_p) ^{1/p}}%
-\beta _{n}^{1/p}+( \beta _{n}^{\prime }) ^{1/p}\right) \overset{}%
{\overset{\mathcal{D}}{\to}}E+E^{\prime }.
\end{equation*}%
Next notice that (\ref{nc}) implies that $b_n/ \sqrt d \to 0$, as $%
d\to \infty $. Thus, recalling
$$\frac{\beta_n}{\sqrt d u_p/\sigma_p}=1+\frac{b_n}{\sqrt d \mu_p/\sigma_p} \quad \mbox{and} \quad \frac{\beta'_n}{\sqrt d u_p/\sigma_p}=1-\frac{b_n}{\sqrt d \mu_p/\sigma_p},$$
we are led to
\begin{equation*}
\frac{pa_{n}}{\beta _{n}^{1/p-1}}\left( \beta _{n}^{1/p}-\left( \beta
_{n}^{\prime }\right) ^{1/p}\right) =2a_{n}b_{n}+\mbox{O}( a_{n}b_{n}^{2}\beta
_{n}^{-1}) =2a_{n}b_{n}+\mbox{o}( 1).
\end{equation*}%
Therefore we get%
\begin{equation*}
\frac{pa_{n(d) }d^{1/2-1/p}}{\mu _{p} ^{1/p-1}\sigma_p}%
\left( \max_{1\leq i\leq n( d) }\Vert {\mathbf{X}_{i}}\Vert
_{p}-\min_{1\leq i\leq n( d) }\Vert {\mathbf{X}_{i}}\Vert
_{p}\right) \overset{}{-2a_{n( d) }b_{n( d) }\overset{%
\mathcal{D}}{\to}}E+E^{\prime }.
\end{equation*}%
\qed
\end{proof}
\section{Proof of Lemma \protect\ref{LK}}
In the sequel, to lighten notation a bit, we set $\overline{Y}=\overline{Y}%
_{d}$. Choose any $\varepsilon >0$ and $\delta >0$ such that $\varphi $ has
continuous derivatives $\varphi ^{\prime }$ and $\varphi ^{\prime \prime }$
on $I_{\delta }=[\mu -\delta ,\mu +\delta ]$ and $|\varphi ^{\prime \prime
}(\mu )-\varphi ^{\prime \prime }(x)|\leq \varepsilon $ for all $x\in
I_{\delta }.$ We see that by Taylor's theorem that for $\overline{Y}\in
I_{\delta }$ 
\begin{equation}
\varphi (\overline{Y})=\varphi (\mu )+\varphi ^{\prime }(\mu )(\overline{Y}%
-\mu )+2^{-1}\varphi ^{\prime \prime }(\widetilde{\mu })(\overline{Y}-\mu
)^{2},  \label{T}
\end{equation}%
where $\widetilde{\mu }$ lies between $\overline{Y}$ and $\mu $. Clearly, 
\begin{align*}
& \left\vert \mathbb{E}\varphi (\overline{Y})-\varphi (\mu )-\frac{\sigma
^{2}\varphi ^{\prime \prime }(\mu )}{2d}\right\vert  \\
& \quad =\left\vert \mathbb{E}\left( \varphi (\overline{Y})-\left( \varphi
(\mu )+\varphi ^{\prime }(\mu )(\overline{Y}-\mu )+2^{-1}\varphi ^{\prime
\prime }(\mu )(\overline{Y}-\mu )^{2}\right) \right) \right\vert  \\
& \quad \leq \left\vert \mathbb{E}\left( \left\{ \varphi (\overline{Y}%
)-\left( \varphi (\mu )+\varphi ^{\prime }(\mu )(\overline{Y}-\mu
)+2^{-1}\varphi ^{\prime \prime }(\mu )(\overline{Y}-\mu )^{2}\right)
\right\} \mathbf{1}\{\overline{Y}\in I_{\delta }\}\right) \right\vert  \\
& \qquad +\mathbb{E}\left( \left\vert \varphi (\overline{Y})\right\vert 
\mathbf{1}\{\overline{Y}\notin I_{\delta }\}\right) +\mathbb{E}\left(
\left\vert P(\overline{Y})\right\vert \mathbf{1}\{\overline{Y}\notin
I_{\delta }\}\right) ,
\end{align*}%
where 
\begin{equation*}
P(y)=\varphi (\mu )+\varphi ^{\prime }(\mu )(y-\mu )+2^{-1}\varphi ^{\prime
\prime }(\mu )(y-\mu )^{2}.
\end{equation*}%
Now using (\ref{T}) and $|\varphi ^{\prime \prime }(\mu )-\varphi ^{\prime
\prime }(x)|\leq \varepsilon $ for all $x\in I_{\delta }$, we may write 
\begin{align*}
& \left\vert \mathbb{E}\left( \left\{ \varphi (\overline{Y})-\left( \varphi
(\mu )+\varphi ^{\prime }(\mu )(\overline{Y}-\mu )+2^{-1}\varphi ^{\prime
\prime }(\mu )(\overline{Y}-\mu )^{2}\right) \right\} \mathbf 1\{\overline{Y}\in
I_{\delta }\}\right) \right\vert   \\
& \quad \leq \frac{\varepsilon }{2}\,\mathbb{E}(\overline{Y}-\mu )^{2}=\frac{%
\varepsilon \sigma ^{2}}{2d}.  
\end{align*}%
Next, we shall bound 
\begin{equation*}
\mathbb{E}\left( \left\vert \varphi (\overline{Y})\right\vert \mathbf 1\{\overline{Y}%
\notin I_{\delta }\}\right) +\mathbb{E}\left( \left\vert P(\overline{Y}%
)\right\vert \mathbf 1 \{\overline{Y}\notin I_{\delta }\}\right) :=\Delta
_{d}^{(1)}+\Delta _{d}^{(2)}.
\end{equation*}%
Recall that we assume that for some $r>1$, condition (\ref{rr}) holds. In
this case, by Theorem 28 on page 286 of \citet{Pe75} applied with ``$r$'' replaced
by ``$r+1$'', for all $\delta >0$,
\begin{equation}
\mathbb{P}\left\{ {|\overline{Y}-\mu |}\geq \delta \right\} =\text{o}%
(d^{-r}).  \label{dr}
\end{equation}%
Then, by using H\"{o}lder's inequality and (\ref{ss}), we get%
\begin{equation*}
\Delta _{d}^{(1)}\leq  \left( \mathbb{E}\left\vert \varphi (\overline{Y%
})\right\vert ^{s} \right) ^{1/s}\left( \mathbb{P}\{\overline{Y}%
\notin I_{\delta }\}\right) ^{1/r}=\text{o}(d^{-1}).  
\end{equation*}%
We shall next bound $\Delta _{d}^{(2)}$. Obviously from (\ref{dr}) 
\begin{equation*}
\left\vert \varphi (\mu )\right\vert \mathbb{P}\{\overline{Y}\notin
I_{\delta }\}=\text{o}(d^{-1}).
\end{equation*}%
Furthermore, by Cauchy-Schwarz inequality and (\ref{dr}),
\begin{equation*}
\mathbb{E}\left\vert \varphi ^{\prime }(\mu )(\overline{Y}-\mu )\mathbf{1}\{%
\overline{Y}\notin I_{\delta }\}\right\vert \leq \left\vert \varphi ^{\prime
}(\mu )\right\vert \sigma d^{-1/2}\text{o}(d^{-r/2})=\text{o}(d^{-1}),
\end{equation*}%
and by H\"{o}lder's inequality with $p=(r+1)/2$ and 
\begin{equation*}
q^{-1}=1-p^{-1}=1-2/(r+1)=(r-1)/(r+1),
\end{equation*}%
we have%
\begin{align*}
& 2^{-1}\left\vert \varphi ^{\prime \prime }(\mu )\right\vert \mathbb{E}%
\left( (\overline{Y}-\mu )^{2}\mathbf{1}\{\overline{Y}\notin I_{\delta
}\}\right)  \\
& \quad \leq 2^{-1}\left\vert \varphi ^{\prime \prime }(\mu )\right\vert
\left( \mathbb{E}|\overline{Y}-\mu |^{r+1}\right) ^{2/(r+1)}\left( \mathbb{P}%
\{\overline{Y}\notin I_{\delta }\}\right) ^{1/q}.
\end{align*}%
Applying Rosenthal's inequality \citep[see equation (2.3) in][]{GiMaZa03} we
obtain%
\begin{align*}
\mathbb{E}|\overline{Y}-\mu |^{r+1}& =\mathbb{E}\left\vert
d^{-1}\sum_{i=1}^{d}(Y_{i}-\mu )\right\vert ^{r+1} \\
& \leq \left( \frac{15(r+1)}{\log (r+1)}\right) ^{r+1}\max \left( d^{-\left(
r+1\right) /2}\left( \mathbb{E}Y^{2}\right) ^{\left( r+1\right)
/2},d^{-r}\mathbb{E}|Y|^{r+1}\right) .
\end{align*}%
Thus 
\begin{equation*}
\left( \mathbb{E}|\overline{Y}-\mu |^{r+1}\right) ^{2/(r+1)}=\text{O}%
(d^{-1}),
\end{equation*}%
which when combined with (\ref{dr}) gives 
\begin{equation*}
2^{-1}\left\vert \varphi ^{\prime \prime }(\mu )\right\vert \left( \mathbb{E}%
|\overline{Y}-\mu |^{r+1}\right) ^{2/(r+1)}\left( \mathbb{P}\{\overline{Y}%
\notin I_{\delta }\}\right) ^{(r-1)/(r+1)}=\text{o}(d^{-1}).
\end{equation*}%
Thus 
\begin{equation*}
\Delta _{d}^{(2)}=\text{o}(d^{-1}).  
\end{equation*}%
Putting everything together, we conclude that for any $\varepsilon >0$%
\begin{equation*}
\limsup_{d\to \infty }d\left\vert \mathbb{E}\varphi (\overline{Y}%
_{d})-\varphi (\mu )-\frac{\sigma ^{2}\varphi ^{\prime \prime }(\mu )}{2d}%
\right\vert \leq \frac{\varepsilon \sigma ^{2}}{2}.
\end{equation*}%
Since $\varepsilon >0$ can be chosen arbitrarily small, this completes the
proof.

\bibliography{biblio-biau-mason}

\end{document}